\newtheorem{theorem}{Theorem}[section]
\newtheorem{lemma}[theorem]{Lemma}
\newtheorem{prop}[theorem]{Proposition}
\newtheorem{cor}[theorem]{Corollary}
\newtheorem{corollary}[theorem]{Corollary}
\theoremstyle{definition}
\theoremstyle{remark}
\newtheorem{remark}[theorem]{Remark}
\numberwithin{equation}{section}
\newcommand{\be}{\begin{equation}}
\newcommand{\ee}{\end{equation}}
\newcommand{\ep}{\varepsilon}
\def\dimh{{\rm dim}_{_{\rm H}}}
\def\dimp{{\rm dim}_{_{\rm P}}}
\def\dimm{\overline{\dim}_{_{\rm M}}}
\def\R{{\mathbb R}}
\def\Q{{\mathbb Q}}
\def\l{{\langle}}
\def\r{\rangle}
\def\a{\alpha}
\def\ga{\gamma}
\def\ep{\varepsilon}
\def\eps{\varepsilon}
\def\si{\sigma}
\def\Re {{\rm Re}\,}
\def\E{{\mathbb E}}
\def\P{{\mathbb P}}
\begin{document}

\title{\bf Fractal Dimensions for Continuous Time Random Walk Limits}

\author{Mark M. Meerschaert}
\address{Mark M. Meerschaert, Department of Probability and Statistics,
Michigan State University, East Lansing, MI 48824}
\email{mcubed@stt.msu.edu}
\urladdr{http://www.stt.msu.edu/$\sim$mcubed/}
\thanks{Research of M. M. Meerschaert was partially
supported by NSF grants DMS-0125486, DMS-0803360, EAR-0823965
and NIH grant R01-EB012079-01.}

\author{Erkan Nane}
\address{Erkan Nane, Department of Mathematics and Statistics,
Auburn University, Auburn, AL 36849}
\email{nane@auburn.edu}
\urladdr{http://www.auburn.edu/$\sim$ezn0001}

\author{Yimin Xiao}
\address{Yimin Xiao, Department Statistics and Probability,
Michigan State University, East Lansing, MI 48824}
\email{xiao@stt.msu.edu}
\urladdr{http://www.stt.msu.edu/$\sim$xiaoyimi}
\thanks{Research of Y. Xiao was partially supported by
NSF grant DMS-1006903.}

\date{\today}

\begin{abstract}
In a continuous time random walk (CTRW), each random jump follows
a random waiting time. CTRW scaling limits are time-changed processes
that model anomalous diffusion. The outer process describes particle
jumps, and the non-Markovian inner process (or time change) accounts
for waiting times between jumps. This paper studies fractal properties
of the sample functions of a time-changed process, and establishes some
general results on the Hausdorff and packing dimensions of its range
and graph. Then those results are applied to CTRW scaling limits.
\end{abstract}

\keywords{Fractional Brownian motion, L\'{e}vy process, strictly stable
process, continuous time random walk, Hausdorff dimension, packing dimension,
self-similarity.}

\maketitle

\section{Introduction}

Continuous time random walks have attracted a lot of attention in
recent years. They provide flexible models for anomalous diffusion phenomena
in a wide range of scientific areas including physics, finance and hydrology.
Consider a random walk $S(n)=J_1+\cdots+J_n$ on $\R^d$, where $\{J_n, n \ge 1\}$
model the particle jumps. The continuous time random walk (CTRW) imposes
a random waiting time between jumps. Let $T_n=W_1+\cdots+W_n$, where $\{W_n, n \ge 1\}$
are nonnegative random variables. The CTRW jumps to location $S(n)$ at time $T_n$. The
number of jumps by time $t\geq 0$ is given by the counting process $N_t
=\max\{n\geq 0:T_n\leq t\}$, where $T_0=0$. The time-changed process
$S(N_t)$ represents the location of a random walker at time $t\geq 0$. A standard
assumption in the literature is that $\{(J_n, Y_n),\, n \ge 1\}$ are iid. In recent
years CTRW with dependent jumps or/and waiting times have also been considered, see
for example \cite{CHS,MNX09,TM10}.

The scaling limit of a CTRW $\{S(N_t), t \ge 0\}$ is a time-changed (or iterated)
process $X=\{X(t), t \ge 0\}$ of the form $X(t) = Y(E_t)$, where the outer process
$\{Y(t), t \ge 0\}$ is the scaling limit of the random walk $
\{S_n, n \ge 0\}$ and the inner process $\{E_t, t \ge 0\}$ accounts for
the random waiting times $\{W_n, n \ge 1\}$. This has been proved by Meerschaert and
Scheffler \cite{MS04}, and Becker-Kern,  Meerschaert and Scheffler \cite{coupleCTRW,CTRW}
under the assumption that $\{(J_n, W_n), n \ge 1\}$ are iid, the jumps
$\{J_n, n \ge 1\}$ belong to the domain of attraction of an operator stable
law and the waiting times $\{W_n, n \ge 1\}$ belong to the strict domain
of attraction of a positive stable random variable $D$ with index $\beta \in (0, 1)$.
In this case, the outer process $\{Y(t), t \ge 0\}$ is an operator stable
L\'evy process with values in $\R^d$ and the inner process $\{(E_t, t \ge 0\}$
is the inverse of a $\beta$-stable subordinator $\{D(x), x \ge 0\}$ with $D(1) = D$.
Namely,
\be\label{Etdef}
E_t = \inf\{x\ge 0: D(x) > t\}, \quad \forall\ t \ge 0.
\ee
The aforementioned authors further proved that the density function $p(t, x)$ of
$X(t) =  Y(E_t)$ solves fractional partial differential equations; see
\cite{fracCauchy,bmn-07,Zsolution} and the references therein for further
information on PDE connections of CTRW limits.

When the independence assumption on the jumps $\{J_n, n \ge 1\}$ is removed,
Meerschaert, Nane and Xiao \cite{MNX09} showed that the outer process $Y$ can
be taken as a fractional Brownian motion, a stable L\'evy process or a linear
fractional stable motion. More general inner processes may also be possible if
the waiting times are dependent.

\begin{figure}
\includegraphics[width=3.5in]{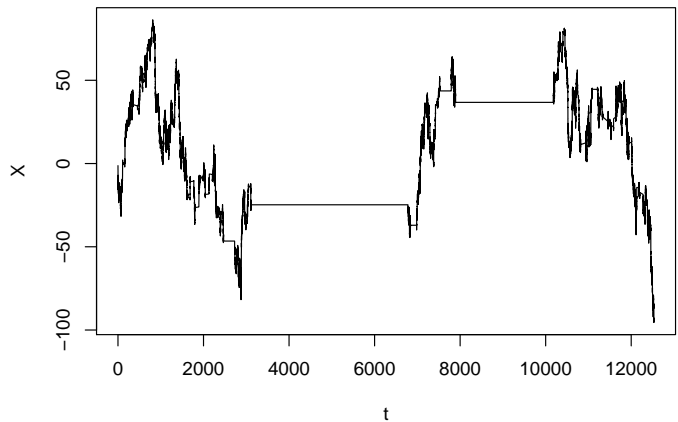}
 \caption{Typical sample path of the iterated process $X(t)=Y(E_t)$.
 Here $Y(t)$ is a Brownian motion and $E_t$ is the inverse of an $0.8$-stable subordinator.
 }\label{fig1}
\end{figure}

In general, a CTRW limiting process $X$ is non-Markovian, non-Gaussian and satisfies
a form of self-similarity. Figure \ref{fig1} illustrates a typical trajectory
of the time-changed process $X(t)=Y(E_t)$,
in the case where the outer process $Y$ is a Brownian motion. The graph resembles
that of a Brownian motion, interrupted by long resting periods. This process $X$ is
the long-time scaling limit of a CTRW with mean zero, finite variance jumps and heavy
tailed waiting times in the domain of attraction of a $\beta$-stable subordinator,
see \cite{MS04}.

This paper is concerned with fractal properties of the CTRW limiting process
$X =\{X(t), t \ge 0\}$ defined by $X(t)= Y(E_t)$ for $t\ge 0$. In particular we
determine the Hausdorff and packing dimensions of the range $X([0, 1]) =\{X(t):
t \in [ 0, 1]\}$ and the graph ${\rm Gr} X([0, 1])= \{(t, X(t)):  t \in[0,1]\}$.
There has been a large literature on sample path and fractal properties
of L\'evy processes \cite{T86,X04}, and Gaussian or stable random fields
\cite{K85,X09}. Several methods have been developed for computing the
Hausdorff dimensions of the range and graph of stochastic
processes under ``minimal'' conditions. To give a brief description of
the general method, let $U = \{U(t), t \ge 0\}$ be a stochastic
process with values in $\R^d$ (for simplicity we assume that the components
of $U(t)$ are independent). If there
exist positive constants $C$
and $H \in (0, 1)$ such that
\begin{equation}\label{Eq:Pre1}
\E\bigg(\sup_{0 \le h \le T}|U(t+h) - U(t)|\bigg) \le C T^H, \qquad
\forall T \in (0, 1),
\end{equation}
then one can prove
\begin{equation}\label{Eq:Pre2}
\dimh U([0, 1]) \le \min\bigg\{d, \frac 1 H\bigg\}\quad \hbox{a.s.}
\end{equation}
and
\begin{equation}\label{Eq:Pre3}
\dimh{\rm Gr} U([0, 1]) \le \min\bigg\{\frac 1 H, 1 + (1-H)d\bigg\}
\quad \hbox{a.s.}
\end{equation}
In the above and sequel, $\dimh $ denotes Hausdorff dimension. Moreover,
if there exist positive constants $C$
and $H \in (0, 1)$ such that
\begin{equation}\label{Eq:Pre4}
\P\bigg( |U(t) - U(s)| \le |t-s|^H x \bigg) \le C \min\big\{1, x^d\big\},
\quad \forall s, t \in [0, 1],
\end{equation}
then equalities hold in both (\ref{Eq:Pre2}) and (\ref{Eq:Pre3}). The above
method can be applied to a wide class of stochastic processes, including
self-similar processes with stationary increments such as stable L\'evy processes,
fractional Brownian motion and the iterated Brownian motion (\cite{burdzy1,burdzy2}).
See \cite{Fal91,XiaoLin,SX10} for further information.

However, the time-changed process $X=\{Y(E_t), t\ge 0\}$ considered in
this paper does not satisfy (\ref{Eq:Pre4}).  In fact, if we consider the process
$X(t) = Y(E_t)$ in Figure 1, where $Y$ is a Brownian motion in $\R^d$ and
$E_t$ is the inverse of a $\beta$-stable subordinator $D$ defined by (\ref{Etdef}),
then the inner process $E_t$ remains constant over infinitely many intervals,
corresponding to the jumps of the stable subordinator $D$. Hence the graph of the
process $X$ remains flat over these resting intervals, as evidenced by Figure 1.
More precisely, it can be proved by using Proposition 2 in
Chapter III of Bertoin \cite{Bertoin96} that, for any $s < t$,
$\P\big\{E_s = E_t\big\} > 0$. This implies that $\P\big\{Y(E_s) - Y(E_t) =
0\big\} > 0$. Hence $X$ does not satisfy (\ref{Eq:Pre4}). As we will see from
Propositions \ref{prop:Levy} and \ref{Prop:FBM}, the actual value of
$\dimh {\rm Gr} X([0, 1])$ may be strictly smaller than what is suggested by
(\ref{Eq:Pre3}).

Packing dimension was introduced in 1980's by Tricot \cite{Tricot} as a dual
concept to Hausdorff dimension, and has become a useful tool for analyzing fractal
sets and sample paths of stochastic processes. It is known that Hausdorff and
packing dimensions of a set $E$ characterize different geometric aspects of $E$
and many random fractals arising in studies of stochastic processes
have different Hausdorff and packing dimensions. See \cite{T86,X04} and the
references therein for more information and \cite{KX08,KSX10,SX10} for recent
development. A fractal set $E \subseteq \R^d$ with the property  $\dimh E =\dimp E$
is usually called a regular fractal. We will see that the range and graph of CTRW
limits considered in Section 3 are often regular fractals.

The rest of this paper is organized as follows. In Section 2 we prove under
quite general conditions that
\begin{equation}\label{dimX}
\dimh X([0, 1]) = \dimh Y([0, 1]) \ \hbox{ and }\  \dimp X([0, 1]) = \dimp Y([0, 1]),
\quad {\rm a.s.},
\end{equation}
\begin{equation}\label{dimGrX}
\dimh {\rm Gr} X([0, 1]) = \max\big\{1, \dimh Z([0, 1])\big\}   \quad
{\rm a.s.},
\end{equation}
and
\begin{equation}\label{DimGrX}
  \dimp {\rm Gr} X([0, 1]) = \max\big\{1, \dimp Z([0, 1])\big\}  \quad
{\rm a.s.},
\end{equation}
where $Z= \{Z(x), x \ge 0\}$ is the $\R^{d+1}$-valued process defined
by $Z(x)= (D(x), Y(x))$ (see (\ref{Eq:Z}) and (\ref{Eq:D}) below).
These results are applied in Section 3 to the scaling limits
of continuous time random walks.  First we consider the uncoupled case, in which
the iid waiting times $\{W_n, n \ge 1\}$ are independent of the iid particle
jumps $\{J_n, n \ge 1\}$.  Then we treat
certain coupled examples, where the jump depends on the previous waiting time.
We also consider triangular array CTRW limits, which lead to general inverse
subordinators. Finally we
examine the case of correlated jumps. In all these cases, the outer process $Y$
is either a L\'evy process or a fractional Brownian motion.

\section{General Results}

In this section we prove some general results on the Hausdorff and packing dimensions
of the range and graph of the time-changed process $X =\{X(t), t \ge 0\}$ defined
by $X(t)= Y(E_t)$ for $t\ge 0$.
We assume that $Y =\{Y(x), x \ge 0\}$ is a stochastic process with values
in $\R^d$ and $E = \{E_t, t \ge 0\}$ is a process with $E_0 = 0$ and nondecreasing
continuous sample functions. Both processes $Y$ and $E$ are defined on a
probability space $(\Omega, {\mathcal F}, \P)$, and they are not necessarily
independent.  In the following section, the results in this section will be
applied to CTRW scaling limits.  There, $E_t$ will be taken as the
inverse of a strictly increasing subordinator $D$, defined by \eqref{Etdef}.
For a coupled CTRW, where the jump variable depends on the waiting time, the inner
process $E_t$ and the outer process $Y(x)$ in the scaling limit are dependent.

First we recall briefly the definitions of Hausdorff and packing
dimension. More detailed information together with their applications to
stochastic processes and other areas can be found in Falconer \cite{Fal90},
Kahane \cite{K85}, Taylor \cite{T86} and Xiao \cite{X04}.
For any $\alpha > 0$,  the \emph{$\alpha$-dimensional Hausdorff measure}
of $F \subseteq \R^d$ is defined by
\begin{equation}
\label{Eq:Hausdorff}
\hbox {$s^\alpha$-$m$}(F) = \lim_{\eps \to 0}\
\inf \bigg\{ \sum_i (2 r_i)^\alpha: F \subseteq \bigcup_{i
=1}^{\infty} B(x_i, r_i), \  r_i < \eps \bigg\},
\end{equation}
where $B(x,r)$ denotes the open ball of
radius $r$ centered at $x.$ The sequence of balls satisfying the
two conditions on the right-hand side of (\ref{Eq:Hausdorff}) is
called an \emph{$\eps$-covering} of $F$.  It is well-known that
$s^\alpha$-$m$ is a metric outer measure and every Borel set in
$\R^d$ is $s^\alpha$-$m$ measurable.
The \emph{Hausdorff dimension} of $F$ is defined by
\[
\dimh F = \inf \big\{ \a > 0:\  \hbox {$s^{\a}$-$m$}(F) = 0\big\}=
\sup \big\{ \a > 0:\  \hbox {$s^{\a}$-$m$}(F) = \infty \big\}.
\]
It is easily verified that $\dimh$ satisfies the $\sigma$-stability property:
For any $F_n \subseteq \R^d$, one has
\begin{equation}\label{Eq:stability}
\dimh \bigg(\bigcup_{n=1}^\infty F_n\bigg) = \sup_{n\ge 1} \dimh F_n.
\end{equation}

Similarly to (\ref{Eq:Hausdorff}), the $\a$-dimensional packing measure
of $F \subset \R^d$ is defined as
\[
\hbox {$s^\alpha$-$p$}\,(F) = \inf \biggl\{\sum_n \hbox
{$s^\alpha$-$P$}(F_n) :\ \ F \subseteq \bigcup_n F_n \biggr\},
\]
where $s^\alpha$-$P$ is the set function on subsets of $\R^{d}$
defined by
\[
\hbox {$s^\alpha$-$P$}(F) = \lim_{\ep \to 0}\ \sup \biggl\{ \sum_i
(2 r_i)^\alpha: \overline {B}(x_i, r_i) \ \hbox{are disjoint,} \  x_i
\in F, \ r_i < \ep \biggr\}.
\]
The packing dimension of $F$ is defined by
$
\dimp F = \inf\big\{ \a > 0:\ \ \hbox {$s^{\a}$-$p$}\,(F) = 0
\big\}.
$
It can be verified that $\dimp$ also satisfies the $\sigma$-stability property
analogous to (\ref{Eq:stability}).

The packing dimension can also be defined through the upper
box-counting dimension.
For any $\eps > 0$ and any bounded set $F \subseteq
{\R^d},$ let $N(F, \eps)$ be the smallest number of balls of
radius $\eps$ needed to cover $F$. The upper box-counting
dimension of $F$ is defined as
\begin{equation}\label{Eq:dimm}
\dimm F = \limsup_{\eps \to 0} \frac{\log N(F, \eps)} {- \log \eps}.
\end{equation}
Tricot \cite{Tricot} proved that the packing dimension of $F$ can be obtained
from $\dimm$ by
\begin{equation}\label{Def:dimp}
\dimp F = \inf \bigg\{\sup_{n}\, \dimm F_n: \ \ F \subseteq
\bigcup_{n=1}^\infty F_n\bigg\},
\end{equation}
see also Falconer \cite[p.45]{Fal90}. It is well known
that for every (bounded) set $F\subseteq \R^d$,
\begin{equation}\label{Eq:dim-rel}
0 \le \dimh F \le \dimp F \le \dimm F \le d.
\end{equation}

The following theorem determines the Hausdorff and packing dimension of the range
$X([0, 1]) =\{X(t): t \in [ 0, 1]\}$ in terms of the range of $Y$.

\begin{theorem}\label{Th:Hdim}
Let $X= \{X(t), t \ge 0\}$ be the iterated process with values in $\R^d$
defined by $X(t) = Y(E_t)$, where the processes $Y$ and $E$ satisfy the
aforementioned conditions. If $E_1 > 0$ a.s. and there exist constants $c_1$
and $c_2$ such that for all constants $0 < a  < \infty$
\begin{equation}\label{Eq:Y-Con}
\dimh Y([0, a]) = c_1, \quad  \dimp Y([0, a]) = c_2\qquad \hbox{a.s.},
\end{equation}
then  almost surely
\begin{equation}\label{Eq:Ran-1}
\dimh X([0, 1]) =  c_1\quad \hbox{ and }\ \dimp X([0, 1]) =  c_2.
\end{equation}
\end{theorem}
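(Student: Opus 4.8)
The plan is to exploit the fact that $X([0,1]) = Y(E_{[0,1]})$, where $E_{[0,1]} = \{E_t : t \in [0,1]\}$ is, by the continuity and monotonicity of $E$, a (random) compact interval $[0, E_1]$. Since $E_1 > 0$ a.s., this interval is nondegenerate almost surely. Thus $X([0,1]) = Y([0, E_1])$, and we have reduced the problem to computing the Hausdorff and packing dimensions of the range of $Y$ over a random time interval whose length is positive but not deterministic. If $E_1$ were a fixed constant $a$, the result would be immediate from hypothesis \eqref{Eq:Y-Con}. The whole content of the proof is handling the randomness of $E_1$, in particular the possible dependence between $E$ and $Y$.

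First I would establish the easy monotonicity bounds. For any fixed rational $q > 0$, on the event $\{E_1 \le q\}$ we have $X([0,1]) = Y([0,E_1]) \subseteq Y([0,q])$, so by \eqref{Eq:Y-Con}, $\dimh X([0,1]) \le \dimh Y([0,q]) = c_1$ and similarly $\dimp X([0,1]) \le c_2$ on that event. Since $E_1 < \infty$ a.s., letting $q \to \infty$ through the rationals covers a set of full probability, giving the upper bounds $\dimh X([0,1]) \le c_1$ and $\dimp X([0,1]) \le c_2$ almost surely. For the lower bounds, fix a rational $q > 0$; on the event $\{E_1 \ge q\}$ we have $Y([0,q]) \subseteq Y([0,E_1]) = X([0,1])$, hence $\dimh X([0,1]) \ge \dimh Y([0,q]) = c_1$ there, and likewise for packing dimension. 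Since $E_1 > 0$ a.s., the events $\{E_1 \ge q\}$ over rational $q > 0$ have union of full probability, so the lower bounds $\dimh X([0,1]) \ge c_1$, $\dimp X([0,1]) \ge c_2$ also hold almost surely. Combining the two directions yields \eqref{Eq:Ran-1}.

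The one point that deserves care — and which I expect to be the main (albeit modest) obstacle — is that the inclusions above are between random sets, so one must be sure the dimension statements are being applied on the right events. The resolution is exactly the device used above: replace the random endpoint $E_1$ by a countable dense family of deterministic values $q \in \Q_{>0}$, use that \eqref{Eq:Y-Con} holds simultaneously for all such $q$ on a single probability-one event (a countable intersection of probability-one events is still probability one), and then sandwich $E_1$ between rationals. No regularity of $Y$ beyond \eqref{Eq:Y-Con} is needed, and crucially no independence between $E$ and $Y$ is used — only that $\{E_1 > 0\} \cap \{E_1 < \infty\}$ has full measure and that $t \mapsto E_t$ is continuous and nondecreasing, so that $E([0,1])$ is genuinely the interval $[0,E_1]$. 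This is why the theorem holds even in the coupled CTRW setting where $Y$ and $E$ are dependent.
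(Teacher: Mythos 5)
Your proposal is correct and follows essentially the same route as the paper: reduce to $X([0,1])=Y([0,E_1])$ via the continuity and monotonicity of $E$, then handle the random endpoint $E_1$ by applying the hypothesis \eqref{Eq:Y-Con} simultaneously over a countable set of rationals on a single full-probability event. The only cosmetic difference is that for the upper bound the paper invokes the $\sigma$-stability of $\dimh$ to get $\dimh Y([0,\infty))=c_1$ a.s.\ and uses $Y([0,E_1])\subseteq Y([0,\infty))$, whereas you sandwich $E_1$ below a rational; both arguments are equivalent.
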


\begin{proof}\ Since the process $t \mapsto E_t$ is non-decreasing and continuous,
the range $E([0, 1])$ is the random interval $[0, E_1]$. Hence $X([0, 1]) =
Y([0, E_1])$.

It follows from the $\sigma$-stability of $\dimh$ and (\ref{Eq:Y-Con}) that
$\dimh Y([0, \infty)) = c_1$ a.s. Hence $\dimh X([0, 1]) \le c_1$
almost surely. On the other hand, (\ref{Eq:Y-Con}) implies
\begin{equation}\label{Eq:Y-Con0}
\P\Big\{\dimh   Y([0, q]) = c_1, \
\forall  q \in \Q_+\Big\} = 1,
\end{equation}
where $\Q_+$ denotes the set of positive rational numbers. Since $E_1 > 0$
almost surely, we see that there is an event $\Omega'\subset \Omega$ with
$\P(\Omega') = 1$ such that for every $\omega \in \Omega'$ we have
$E_1(\omega) > 0$ and $\dimh Y([0, q], \omega)
= c_1$ for all $q \in \Q_+$. Since for every $\omega \in \Omega'$
there is a $q \in \Q_+$ such that $0 < q <
E_1(\omega)$, we derive that
$$ \dimh X([0, 1], \omega) = \dimh Y([0, E_1(\omega)], \omega)
\ge \dimh Y([0, q], \omega) = c_1.$$
Combining the upper and lower bounds for $\dimh X([0, 1])$ yields the
first equation in (\ref{Eq:Ran-1}). The proof of the second equation
in (\ref{Eq:Ran-1}) is very similar and is omitted.
\end{proof}

Applying Theorem \ref{Th:Hdim} to the space-time process $x\mapsto
(x, Y(x))$ with values in $\R^{d+1}$, one obtains immediately the
following corollary.

\begin{corollary}\label{Co:1}
Let $X= \{X(t), t \ge 0\}$ be the iterated process with values in $\R^d$
as in Theorem \ref{Th:Hdim}.
If  $E_1 > 0$ a.s. and there exist constants $c_3$ and $c_4$ such that for all
constants $0 < a  < \infty$
\begin{equation}\label{Eq:Y-Con1}
\dimh {\rm Gr}Y([0, a]) = c_3\ \  \hbox{ and }\ \ \dimp {\rm Gr}Y([0, a]) = c_4, \quad a.s.,
\end{equation}
then
\begin{equation}\label{Eq:Graph}
\dimh \big\{(E_t, Y(E_t)): t \in [0, 1]\big\} = \dimh {\rm Gr}Y([0, 1]),
\qquad a.s.
\end{equation}
and
\begin{equation}\label{Eq:Graph-p}
\dimp \big\{(E_t, Y(E_t)): t \in [0, 1]\big\} = \dimp {\rm Gr}Y([0, 1]),
\qquad a.s.
\end{equation}
\end{corollary}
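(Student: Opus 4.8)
The plan is to deduce the corollary directly from Theorem \ref{Th:Hdim}, run with the space-time process in place of the outer process. Concretely, I would introduce the $\R^{d+1}$-valued process $V = \{V(x),\, x \ge 0\}$ defined by $V(x) = (x, Y(x))$. Since $Y$ is an $\R^d$-valued stochastic process on $(\Omega, \mathcal F, \P)$, the process $V$ is an $\R^{d+1}$-valued stochastic process on the same space; the inner process $E$ is untouched (still $E_0 = 0$ with nondecreasing continuous sample paths), and no independence between $V$ and $E$ is required by Theorem \ref{Th:Hdim}. Hence the pair $(V, E)$ satisfies the standing hypotheses of this section, and $E_1 > 0$ a.s.\ holds by assumption. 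The only translation to record is the set identity $V([0, a]) = \{(x, Y(x)): x \in [0, a]\} = {\rm Gr}\, Y([0, a])$ for every $a > 0$, so that the hypothesis \eqref{Eq:Y-Con1} is exactly the hypothesis \eqref{Eq:Y-Con} for $V$ (with $c_1, c_2$ replaced by $c_3, c_4$ and $d$ by $d+1$).

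Next I would observe that the iterated process built from $V$ is $t \mapsto V(E_t) = (E_t, Y(E_t))$, whose range over $[0, 1]$ is precisely the set $\{(E_t, Y(E_t)): t \in [0, 1]\}$ appearing in \eqref{Eq:Graph} and \eqref{Eq:Graph-p}. Applying the conclusion \eqref{Eq:Ran-1} of Theorem \ref{Th:Hdim} to $V$ then gives, almost surely, $\dimh \{(E_t, Y(E_t)): t \in [0,1]\} = c_3$ and $\dimp \{(E_t, Y(E_t)): t \in [0,1]\} = c_4$. Finally, specializing \eqref{Eq:Y-Con1} to $a = 1$ identifies $c_3 = \dimh {\rm Gr}\, Y([0, 1])$ and $c_4 = \dimp {\rm Gr}\, Y([0, 1])$ a.s., which is exactly \eqref{Eq:Graph} and \eqref{Eq:Graph-p}.

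There is no substantive obstacle in this argument; it is purely a relabeling of Theorem \ref{Th:Hdim}. The one point worth stating carefully is that the dimension constants in \eqref{Eq:Y-Con1} must be independent of $a$ — but this is precisely how the hypothesis is phrased, matching the form needed by the theorem — and that $V$ inherits no regularity requirement beyond being a stochastic process (in particular it need not have continuous paths, which matters for the applications in Section 3 where $Y$ is a L\'evy process), since Theorem \ref{Th:Hdim} imposes continuity only on the inner process $E$.
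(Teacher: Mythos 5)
Your proposal is correct and is exactly the paper's argument: the authors derive Corollary \ref{Co:1} by applying Theorem \ref{Th:Hdim} to the space-time process $x\mapsto (x,Y(x))$ in $\R^{d+1}$, whose range over $[0,a]$ is ${\rm Gr}\,Y([0,a])$ and whose time change gives $\{(E_t,Y(E_t)): t\in[0,1]\}$. You have simply spelled out the details that the paper leaves as ``one obtains immediately.''
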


The random set in the left hand side of (\ref{Eq:Graph}) may be interesting,
but it is quite different than the graph of $X$.
In order to determine the Hausdorff and packing dimension of the graph set of $X$,
we will make use of the $\R^{d+1}$-valued process $Z = \{Z(x), x\ge 0\}$
defined on the probability space $(\Omega, {\mathcal F}, \P)$ by
\begin{equation}\label{Eq:Z}
Z(x)= (D(x), Y(x)), \qquad \forall x \ge 0,
\end{equation}
where $D= \{D(x), x \ge 0\}$ is defined by
\begin{equation}\label{Eq:D}
D(x) = \inf\big\{t>0: E_t > x\big\}.
\end{equation}
Since $t \mapsto E_t$ is nondecreasing and continuous, it can be verified
that the function $x \mapsto D(x)$ is strictly increasing and
right continuous, thus can have at most countably many jumps. Moreover,
one can verify that $D(E_t) \ge t$ for all $t \ge 0$ and $E_{D(x)} = x$
for all $x \ge 0$.

\begin{theorem}\label{Th:Hdim2}
Let $X= \{X(t), t \ge 0\}$ be the iterated process with values in $\R^d$
as in Theorem \ref{Th:Hdim}, and let $Z = \{Z(x), x\ge 0\}$ be the $\R^{d+1}$-valued
process defined by (\ref{Eq:Z}) and (\ref{Eq:D}). If $E_1 > 0$ a.s. and there exist
constants $c_5$ and $c_6$ such that for all constants $0 < a  < \infty$
\begin{equation}\label{Eq:Y-Con2}
\dimh Z([0, a]) = c_5 \ \ \hbox{ and }\ \ \dimp Z([0, a]) = c_6 \quad \hbox{a.s.},
\end{equation}
then
\begin{equation}\label{Eq:Graph0}
\dimh {\rm Gr}X([0, 1]) =  \max\big\{1, \dimh Z([0, 1])\big\}, \qquad \hbox{a.s.}
\end{equation}
and
\begin{equation}\label{Eq:Graph0-p}
\dimp {\rm Gr}X([0, 1]) =  \max\big\{1, \dimp Z([0, 1])\big\}, \qquad \hbox{a.s.}
\end{equation}
\end{theorem}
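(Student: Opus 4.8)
The plan is to prove (\ref{Eq:Graph0}) by establishing matching upper and lower bounds for $\dimh {\rm Gr} X([0,1])$, and then to observe that (\ref{Eq:Graph0-p}) follows by the identical argument with $\dimh$ replaced by $\dimp$ throughout: the only properties used are $\sigma$-stability, monotonicity under set inclusion, the fact that a bounded line segment has packing (and Hausdorff) dimension $1$, and the fact that Lipschitz maps do not increase either dimension. As in the proof of Theorem \ref{Th:Hdim}, I would first upgrade (\ref{Eq:Y-Con2}) to the single almost sure event on which $\dimh Z([0,q]) = c_5$ and $\dimp Z([0,q]) = c_6$ for \emph{all} $q \in \Q_+$, and work on the further a.s.\ event $\{0 < E_1 < \infty\}$.

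For the lower bound I would use two facts. First, the coordinate projection $(t,y) \mapsto t$ maps ${\rm Gr} X([0,1])$ onto $[0,1]$, so $\dimh {\rm Gr} X([0,1]) \ge 1$ (and likewise for $\dimp$). Second, since $E_{D(x)} = x$ we have $X(D(x)) = Y(E_{D(x)}) = Y(x)$, and $D(x) = \inf\{t : E_t > x\} \le 1$ whenever $x < E_1$ (because then $t=1$ lies in that set). Hence $\{Z(x): 0 \le x < E_1\} = \{(D(x), X(D(x))): 0 \le x < E_1\} \subseteq {\rm Gr} X([0,1])$, so choosing a rational $q$ with $0 < q < E_1$ gives $\dimh {\rm Gr} X([0,1]) \ge \dimh Z([0,q]) = c_5$ on our a.s.\ event, and similarly for packing dimension. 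Thus $\dimh {\rm Gr} X([0,1]) \ge \max\{1, c_5\}$ and $\dimp {\rm Gr} X([0,1]) \ge \max\{1, c_6\}$ a.s.

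For the upper bound the key structural observation is that, for every $t \ge 0$, writing $x := E_t$, one has $D(x-) \le t \le D(x)$: indeed $E_s \le E_t = x$ for $s \le t$ forces $t \le \inf\{s : E_s > x\} = D(x)$, while $E_t = x > y$ for each $y < x$ gives $D(y) \le t$, hence $D(x-) = \sup_{y<x} D(y) \le t$. Consequently $(t, X(t)) = (t, Y(x))$ with $t \in [D(x-), D(x)]$; if $D$ is continuous at $x$ this forces $t = D(x)$ and so $(t, X(t)) = Z(x)$, whereas if $D$ has a jump at $x$ — which happens for at most countably many $x$ (together with $x=0$ if $D(0)>0$), say $x \in J$ — the point $(t, X(t))$ lies on the horizontal segment $[D(x-), D(x)] \times \{Y(x)\}$. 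Since $E_t \in [0, E_1]$ for all $t \in [0,1]$, this yields the inclusion
\[
{\rm Gr} X([0,1]) \ \subseteq\ Z([0, E_1]) \ \cup\ \bigcup_{x \in J} \big([D(x-), D(x)] \times \{Y(x)\}\big).
\]
The right-hand union is a countable union of line segments, hence of dimension at most $1$; and $\dimh Z([0, E_1]) \le c_5$ a.s.\ by monotonicity applied along rationals exceeding $E_1$ (using $E_1 < \infty$). By $\sigma$-stability, $\dimh {\rm Gr} X([0,1]) \le \max\{1, c_5\}$, and likewise $\dimp {\rm Gr} X([0,1]) \le \max\{1, c_6\}$, a.s. Combined with the lower bounds and the identities $c_5 = \dimh Z([0,1])$, $c_6 = \dimp Z([0,1])$ from (\ref{Eq:Y-Con2}), this proves (\ref{Eq:Graph0}) and (\ref{Eq:Graph0-p}).

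The step I expect to demand the most care is the displayed inclusion: one must correctly account for the way the flat stretches of the time change $E$ (equivalently, the jumps of $D$) produce the horizontal pieces of the graph of $X$, and verify that every point of ${\rm Gr} X([0,1])$ not lying on such a piece is exactly a point $Z(x)$. The inequalities $D(E_t-) \le t \le D(E_t)$ are the crux of this; the remaining ingredients — rational approximation, $\sigma$-stability, and the projection bound — are routine and parallel the proof of Theorem \ref{Th:Hdim}.
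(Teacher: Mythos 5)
Your proof is correct and follows essentially the same route as the paper's: both rest on the decomposition of $[0,1]$ into $D([0,E_1))$ plus the countably many constancy intervals of $E$ (equivalently the jumps of $D$), the identification of the graph over $D([0,E_1))$ with $Z([0,E_1))$ via $E_{D(x)}=x$, $\sigma$-stability, and rational approximation of $E_1$ from both sides. The only cosmetic difference is that you argue by two inclusions (upper and lower bounds) where the paper writes an exact set decomposition, and you justify the inclusion more explicitly via $D(E_t-)\le t\le D(E_t)$.
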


\begin{proof} We only prove (\ref{Eq:Graph0}), and the proof of (\ref{Eq:Graph0-p})
is similar. The sample function $x \mapsto D(x)$ is a.s.\ strictly increasing
and we can write the unit interval [0, 1] in the state space of $D$ as
\begin{equation}\label{Eq:Dec1}
[0, 1] = D([0, E_1))\cup \bigcup_{i=1}^\infty I_i,
\end{equation}
where for each $i \ge 1$, $I_i$ is a subintervals on which $E_t$ is a constant.
Using $D$ we can express $I_i= [D(x_{i}-), D(x_i))$, which is the gap corresponding to
the jumping site $x_i$ of $D$, except in the case when $x_i = E_1$. In the latter
case, $I_i = [D(x_{i}-), 1]$.

Notice that $I_i$ ($i \ge 1$) are disjoint intervals and
\[
E_t = E_s \ \hbox{ if and only if }\ s, t \in I_i\, \hbox{ for some }\ i \ge 1.
\]
Thus, over each interval $I_i$, the graph of $X$ is a horizontal line segment.
More precisely, we can decompose the graph set of $X$ as
\begin{equation}\label{Dec2}
\begin{split}
{\rm Gr}X([0, 1]) &= \big\{(t, Y(E_t)): t \in [0, 1]\big\}\\
&= \big\{(t, Y(E_t)): t \in D([0, E_1))\big\} \cup
\bigcup_{i=1}^\infty\big\{(t, Y(E_t)): t \in I_i\big\}.
\end{split}
\end{equation}
Hence, by the $\sigma$-stability of $\dimh$, we have
\begin{equation}\label{Dec3}
\begin{split}
\dimh {\rm Gr}X([0, 1]) &= \max\big\{1, \dimh \big\{(t, Y(E_t)): t \in D([0, E_1))\big\}\big\}.
\end{split}
\end{equation}
On the other hand, every $t \in D([0, E_1))$ can be written as $t = D(x)$ for some
$0 \le x < E_1$ and $E_t = E_{D(x)} = x$, we see that
\begin{equation}\label{Dec4}
\big\{(t, Y(E_t)): t \in D([0, E_1)) \big\}
= \big\{(D(x), Y(x)): x \in [0, E_1))\big\}, \quad
\hbox{a.s.}
\end{equation}
It follows from (\ref{Eq:Y-Con2}) that
\begin{equation}\label{Dec5}
\P\Big\{\omega: \dimh \big\{(D(x, \omega), Y(x, \omega)): x \in [0, q])\big\}
= c_5, \quad \forall   q \in \Q_+\Big\} = 1.
\end{equation}
Combining this with the assumption that $E_1(\omega) > 0$ almost surely, we can find an
event $\Omega_2''$ such that  $\P (\Omega_2'') = 1$ and
for every $\omega \in \Omega_2''$  we derive from (\ref{Dec5}) that
\begin{equation}\label{Dec6}
\dimh \big\{(D(x, \omega), Y(x, \omega)): x \in [0, E_1(\omega))\big\} = c_5,
\end{equation}
since $q_1<E_1(\omega_2)<q_2$ for some $q_1,q_2 \in \Q_+$, and $U\subseteq V$
implies $\dimh(U)\leq\dimh(V)$. Combining (\ref{Dec4}) and (\ref{Dec6}) yields
\begin{equation}\label{Dec7}
\dimh \big\{(t, Y(E_t)): t \in D([0, E_1))\big\} = c_5, \quad
\hbox{a.s.}
\end{equation}
Therefore,  (\ref{Eq:Graph0}) follows from (\ref{Dec3}) and  (\ref{Dec7}).
\end{proof}

Many self-similar processes $Y$ with stationary increments satisfy
(\ref{Eq:Y-Con}) and (\ref{Eq:Y-Con2}), hence Theorems
\ref{Th:Hdim} and \ref{Th:Hdim2} have wide applicability.
To apply the above theorems to CTRW scaling limits,
we now take $E_t$ to be the inverse of a subordinator $D= \{D(x), x \ge 0\}$.
We assume that $D$ has no drift, $D(0) = 0$ and its Laplace transform is given by
$${\mathbb E}[e^{-s D(x)}] = e^{-x\psi_D(s)},$$
where the Laplace exponent
\begin{equation}\label{psiD2}
\psi_D(s)=\int_0^\infty(1-e^{-s y})\nu_D(dy).
\end{equation}
We also assume that the L\'evy measure $\nu_D$ of $D$ satisfies
$ \nu_D(0,\infty)=\infty$,
so that the sample function $x \mapsto D(x)$ is a.s.\ strictly increasing.

Let $E_t$ denote the inverse of $D$ defined by \eqref{Etdef}.
Since the sample function of $D$ is strictly increasing, we see that the function
$t \mapsto E_t$ is almost surely continuous and nondecreasing. Moreover,
$\P\big\{E_1 > 0\big\} = 1$.

\begin{cor}\label{Cor:Hdim}
Let $X= \{X(t), t \ge 0\}$ be the iterated process with values in $\R^d$
as in Theorem \ref{Th:Hdim}, where $E_t$ is the inverse \eqref{Etdef} of a strictly
increasing subordinator $D$ with $D(0) = 0$. Then the conclusions of
Theorem \ref{Th:Hdim}, Corollary \ref{Co:1}, and Theorem \ref{Th:Hdim2} hold.
\end{cor}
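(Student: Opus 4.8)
The plan is to observe that Corollary \ref{Cor:Hdim} is not a new computation but a verification that, when $E_t$ is the inverse \eqref{Etdef} of a strictly increasing subordinator $D$ with $D(0)=0$, all of the hypotheses on the inner process $E$ and on the auxiliary process $Z$ in Theorem \ref{Th:Hdim}, Corollary \ref{Co:1} and Theorem \ref{Th:Hdim2} are automatically in force; their conclusions then follow directly once the respective dimension conditions on $Y$ are assumed.

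First I would record the structural facts, most of which are already noted in the paragraph preceding the corollary: since $\nu_D(0,\infty)=\infty$, the path $x\mapsto D(x)$ is a.s.\ strictly increasing, right continuous, with $D(0)=0$; consequently $t\mapsto E_t$ has a.s.\ continuous, nondecreasing sample paths, $E_0=0$, and $\P\{E_1>0\}=1$. Thus $Y$ and $E$ meet the standing assumptions of Theorem \ref{Th:Hdim}, and the requirement $E_1>0$ a.s.\ holds, so the conclusions of Theorem \ref{Th:Hdim} and Corollary \ref{Co:1} apply as soon as the dimension conditions \eqref{Eq:Y-Con}, resp.\ \eqref{Eq:Y-Con1}, on $Y$ are imposed.

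The one point that really needs an argument is that the process reconstructed from $E$ by \eqref{Eq:D}, namely $x\mapsto \inf\{t>0: E_t>x\}$, coincides a.s.\ with the original subordinator $D$; only then is the process $Z$ of \eqref{Eq:Z} literally $\big(D(x),Y(x)\big)$ for the given $D$, so that \eqref{Eq:Y-Con2} is the right hypothesis and Theorem \ref{Th:Hdim2} applies. I would establish, for any strictly increasing, right-continuous path with $D(0)=0$, the duality
\be
 E_t>x \iff D(x)<t, \qquad t\ge 0,\ x\ge 0,
\ee
by elementary manipulation of the defining infimum: if $D(x)<t$, right continuity gives $\delta>0$ with $D(x+\delta)<t$, whence $D(y)\le D(x+\delta)<t$ for all $y\le x+\delta$ and therefore $E_t=\inf\{y: D(y)>t\}\ge x+\delta>x$; conversely $E_t>x$ forces $x\notin\{y: D(y)>t\}$, i.e.\ $D(x)\le t$, and $D(x)=t$ is excluded because strict monotonicity would give $D(y)>t$ for every $y>x$, hence $E_t\le x$. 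Taking the infimum over $t>0$ on the right-hand side of the displayed equivalence yields $\inf\{t>0:E_t>x\}=D(x)$ on the almost-sure event in question, which is the desired identification.

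I expect no genuine obstacle here: the only non-routine ingredient is this double-inversion identity, and it is a short deterministic lemma about monotone c\`adl\`ag functions; the mild care needed is confined to the boundary case $D(x)=t$ in the duality and to noting that everything takes place on a single almost-sure event. With the identification of $D$ (hence of $Z$) in hand, the remaining content is just citation of the discussion preceding the corollary together with substitution of these verified hypotheses into Theorem \ref{Th:Hdim}, Corollary \ref{Co:1} and Theorem \ref{Th:Hdim2}.
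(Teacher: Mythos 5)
Your proposal is correct and follows essentially the same route as the paper, which offers no separate proof of this corollary beyond the preceding discussion that $t\mapsto E_t$ is a.s.\ continuous and nondecreasing with $E_0=0$ and $\P\{E_1>0\}=1$, so that the hypotheses of the earlier results are met. The one thing you add beyond the paper is an explicit (and correct) verification of the double-inversion identity $\inf\{t>0: E_t>x\}=D(x)$ via the duality $E_t>x\iff D(x)<t$, which the paper leaves implicit when it identifies the process in \eqref{Eq:D} with the given subordinator; this is a worthwhile detail to record, and your handling of the boundary case $D(x)=t$ via strict monotonicity is the right way to close it.
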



\section{Continuous Time Random Walk Limits}

In the following, we compute the Hausdorff and packing dimensions of
the range and graph of the sample path of scaling limits of continuous
time random walks.

\subsection{CTRW with iid jumps: The uncoupled case}
Consider a CTRW whose iid waiting times $\{W_n, n \ge 1\}$ belong to the
domain of attraction of the positive $\beta$-stable random variable $D(1)$,
and whose iid jumps $\{J_n, n \ge 1\}$ belong to the strict domain of
attraction of the $d$-dimensional stable random vector $Y(1)$. We assume
that $\{W_n\}$ and $\{J_n\}$ are independent; that is, the CTRW is
uncoupled.

It follows from Theorem 4.2 in Meerschaert and Scheffler \cite{MS04}
that the scaling limit of this CTRW is a time-changed process $X(t)=
Y(E_t)$, where $E_t$ is the inverse \eqref{Etdef} of a $\beta$-stable
subordinator $D$. Since $D$ is self-similar with index $1/\beta$, its
inverse $E$ is self-similar with index $\beta$.  Since $Y$ is independent
of $E$, the CTRW scaling limit $X$ is self-similar with index $\beta/\alpha$.

\begin{prop}\label{prop:Levy}
Let $X=\{Y(E_t), t \ge 0\}$, where $Y=\{Y(x):\ x\geq 0\}$ is a
stable L\'evy motion of index $\alpha \in (0, 2]$ with values
in $\R^d$ and $E_t$ is the inverse of a stable subordinator
of index $0<\beta<1$, independent of $Y$. Then
\begin{equation}\label{Eq:Range1}
\dimh X([0, 1]) = \dimp X([0, 1])= \min\{d, \alpha\}, \qquad \hbox{a.s.}
\end{equation}
and
\begin{equation}\label{Eq:G1}
\begin{split}
\dimh {\rm Gr}X([0, 1]) &= \dimp {\rm Gr}X([0, 1])\\
&=\left\{\begin{array}{ll}
 \max\{1, \alpha\}\qquad \ &\hbox{ if } \alpha \le d,\\
 1+\beta(1 - \frac 1 \alpha)  &\hbox{ if }  \alpha > d =1,\\
\end{array}
\right. \qquad \hbox{a.s.}
\end{split}
\end{equation}
\end{prop}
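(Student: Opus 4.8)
The plan is to deduce \eqref{Eq:Range1} from Theorem~\ref{Th:Hdim} and \eqref{Eq:G1} from Theorem~\ref{Th:Hdim2}, applying them through Corollary~\ref{Cor:Hdim}. Indeed, here $E_t$ is the inverse \eqref{Etdef} of the $\beta$-stable subordinator $D$, which is strictly increasing with $D(0)=0$ and infinite L\'evy measure, so Corollary~\ref{Cor:Hdim} reduces everything to verifying the dimension hypotheses \eqref{Eq:Y-Con} and \eqref{Eq:Y-Con2} and identifying the constants there; it also guarantees that the $D$ appearing in \eqref{Eq:Z}--\eqref{Eq:D} is the original $\beta$-stable subordinator.

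First I would handle the range of $X$. Since $Y$ is strictly $\alpha$-stable it is self-similar with exponent $1/\alpha$, so $Y([0,a])\overset{d}{=}a^{1/\alpha}Y([0,1])$; as Hausdorff and packing dimension are invariant under the scaling map $x\mapsto a^{1/\alpha}x$, the random variables $\dimh Y([0,a])$ and $\dimp Y([0,a])$ have a distribution not depending on $a$, and by a zero--one law they are a.s.\ constant. By the classical results of Blumenthal--Getoor and Pruitt--Taylor, the range of a strictly stable L\'evy process in $\R^d$ is a.s.\ a regular fractal of dimension $\min\{d,\alpha\}$, so \eqref{Eq:Y-Con} holds with $c_1=c_2=\min\{d,\alpha\}$ and Theorem~\ref{Th:Hdim} gives \eqref{Eq:Range1}.

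Next, for the graph, I would apply Theorem~\ref{Th:Hdim2} to $Z=(D,Y)$, which is a L\'evy process in $\R^{d+1}$ with independent coordinates, operator-self-similar with exponent ${\rm diag}(1/\beta,(1/\alpha)I_d)$ (hence operator stable); as before its range dimensions are a.s.\ constant and independent of $a$, so it remains to show
\[
\dimh Z([0,1])=\dimp Z([0,1])=\kappa:=
\begin{cases}\max\{\alpha,\beta\} & \text{if } \alpha\le d,\\ 1+\beta(1-1/\alpha) & \text{if } \alpha>d=1.\end{cases}
\]
For the upper bound I would estimate the expected sojourn $U(B(0,r))=\int_0^1\P(|Z(t)|\le r)\,dt$ using independence, the scaling relations $D(t)\overset{d}{=}t^{1/\beta}D(1)$ and $Y(t)\overset{d}{=}t^{1/\alpha}Y(1)$, the density bound $\P(|Y(1)|\le s)\asymp\min\{1,s^d\}$, and the super-polynomially small left tail of $D(1)$; this yields $U(B(0,r))\asymp r^\kappa$ (up to a logarithmic factor when $\alpha=d$), whence $\dimm Z([0,1])\le\kappa$ by a standard Pruitt-type covering argument, so $\dimp Z([0,1])\le\kappa$. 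For the lower bound, in the case $\alpha\le d$ it suffices to project onto the coordinate subspaces, giving $\dimh Z([0,1])\ge\max\{\dimh Y([0,1]),\dimh D([0,1])\}=\max\{\alpha,\beta\}$. In the case $\alpha>d=1$, however, projection only delivers the bound $1$, so I would use the energy method: bounding $|Z(u)|\ge D(u)^\theta|Y(u)|^{1-\theta}$ and using independence together with the facts that all negative moments of $D(1)$ are finite while $\E|Y(1)|^{-q}<\infty$ iff $q<d$, one shows $\E\iint_{[0,1]^2}|Z(s)-Z(t)|^{-\gamma}\,ds\,dt<\infty$ for every $\gamma<1+\beta(1-1/\alpha)$ by choosing $\theta$ just above $1-1/\gamma$, and Frostman's lemma then gives $\dimh Z([0,1])\ge 1+\beta(1-1/\alpha)$. (Alternatively one could quote the known dimension formula for ranges of operator stable L\'evy processes, which directly yields $\dimh Z([0,1])=\dimp Z([0,1])=\kappa$.)

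Feeding $\kappa$ into Theorem~\ref{Th:Hdim2} then finishes the proof: $\max\{1,\kappa\}=\max\{1,\alpha\}$ when $\alpha\le d$ (because $\beta<1$), and $\max\{1,\kappa\}=1+\beta(1-1/\alpha)$ when $\alpha>d=1$ (because $\beta(1-1/\alpha)>0$), which is precisely \eqref{Eq:G1}. The main obstacle is exactly the sharp lower bound $\dimh Z([0,1])\ge 1+\beta(1-1/\alpha)$ in the regime $\alpha>d=1$: projection arguments are too lossy there, and one must carry out the second-moment/energy estimate (or appeal to the operator stable dimension theorem), carefully tracking the interaction between the $\beta$-subordinator coordinate and the $\alpha$-stable coordinate along the common time parameter.
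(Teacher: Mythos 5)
Your argument is correct, and the overall reduction is exactly the paper's: deduce \eqref{Eq:Range1} from Theorem~\ref{Th:Hdim} together with the Blumenthal--Getoor and Pruitt--Taylor results on the range of $Y$, and deduce \eqref{Eq:G1} from Theorem~\ref{Th:Hdim2} applied to $Z=(D,Y)$, the whole content being the identification of $\dimh Z([0,a])=\dimp Z([0,a])$ with the quantity you call $\kappa$ (the paper's \eqref{Eq:G1a}). The only real difference is that the paper simply cites \cite{PT69} for the Hausdorff dimension of the range of a L\'evy process with independent stable components and \cite{MX05} for the matching packing dimension, whereas you sketch a self-contained derivation: a sojourn-time/covering estimate for the upper bound on $\dimm Z([0,a])$, projections for the lower bound when $\alpha\le d$, and an interpolation-plus-energy argument (using $|Z(s)-Z(t)|\ge |D(s)-D(t)|^\theta\,\|Y(s)-Y(t)\|^{1-\theta}$ with $\theta$ just above $1-1/\gamma$, independence, the finiteness of all negative moments of $D(1)$, and $\E\|Y(1)\|^{-q}<\infty$ for $q<d$) for the sharp lower bound $1+\beta(1-1/\alpha)$ when $\alpha>d=1$. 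That derivation is sound --- the exponent bookkeeping $\gamma\theta/\beta+\gamma(1-\theta)/\alpha<1$ does reduce to $\gamma<1+\beta(1-1/\alpha)$ as $\theta\downarrow 1-1/\gamma$ --- and it closely parallels the arguments the paper itself carries out in Lemmas~\ref{Lem:Up} and \ref{Lem:Lw} for the fractional Brownian motion case; what the citation route buys is brevity, while your route makes the proposition self-contained and makes explicit where the $\beta$-dependence of the graph dimension enters.
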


\begin{proof}
The result (\ref{Eq:Range1}) follows from Theorem \ref{Th:Hdim} (or Corollary
\ref{Cor:Hdim})  and the known results on the Hausdorff and packing dimension
of the range of the stable L\'evy process $Y$. The former is due to Blumenthal
and Getoor \cite{BG60a, BG60b}, and the latter is due to Pruitt and Taylor \cite{PT96}.

In order to prove (\ref{Eq:G1}), we first recall from Pruitt and
Taylor \cite{PT69} their result on the Hausdorff dimension of the range
of the L\'evy process $Z(x) = (D(x), Y(x))$ with independent stable
components: for any constant $a> 0$,
\begin{equation}\label{Eq:G1a}
\dimh Z([0, a]) =\left\{\begin{array}{ll}
\beta \qquad \ &\hbox{ if }   \alpha \le \beta,\\
 \alpha\qquad \ &\hbox{ if } \beta <  \alpha \le d,\\
 1+\beta(1 - \frac 1 \alpha)  &\hbox{ if }  \alpha > d =1,\\
\end{array}
\right. \qquad \hbox{a.s.}
\end{equation}
Theorem 3.2 in Meerschaert and Xiao \cite{MX05} (see also Khoshnevisan and Xiao \cite{KX08}
for more general results) shows that $\dimp Z([0, a])$ also equals
the right hand side of (\ref{Eq:G1a}). Therefore, (\ref{Eq:G1}) follows from the
above and Theorem \ref{Th:Hdim2}.
\end{proof}


If the CTRW jumps $(J_n)$ have finite second moments, then the limiting process is
$X(t) = B(E_t)$, where $B$ is a Brownian motion, and Proposition \ref{prop:Levy}
with $\alpha = 2$ gives
\begin{equation}\label{Eq:G1b}
\dimh {\rm Gr}X([0, 1]) = \dimp {\rm Gr}X([0, 1]) =\left\{\begin{array}{ll}
1+ \frac \beta 2  \qquad \ &\hbox{ if }  d =1,\\
 2 \ &\hbox{ if } d \ge 2,
\end{array}
\right. \qquad \hbox{a.s.}
\end{equation}

The Hausdorff dimension of the  graph of a stable L\'evy process $Y$ in $\R^d$ was
determined by Blumenthal and Getoor \cite{BG62} when $d=1$ and $Y$ is symmetric, by
Jain  and Pruitt \cite{JP68} when $Y$ is transient (i.e. $d > \alpha$) and
by Pruitt and Taylor \cite{PT69} in general. The packing dimension of the graph of
$Y$ was determined by Rezakhanlou and Taylor \cite{RT88}. Combining their results with
Corollary \ref{Co:1}, we obtain
\begin{equation}\label{Eq:G1d}
\begin{split}
\dimh \big\{(E_t, Y(E_t)): t \in [0, 1]\big\} &= \dimp \big\{(E_t, Y(E_t)): t \in [0, 1]\big\}\\
&= \left\{\begin{array}{ll}
 \max\{1, \alpha\}\qquad \ &\hbox{ if } \alpha \le d,\\
 2 - \frac 1 \alpha  &\hbox{ if }  \alpha > d =1.
\end{array}
\right. \qquad \hbox{a.s.}
\end{split}
\end{equation}
Clearly, this is different from (\ref{Eq:G1}). Moreover, we notice that the results
(\ref{Eq:Range1}) and (\ref{Eq:G1d}) do not depend on $\beta$, because
the set $\{E_t(\omega):t \in [0, 1]\}$ is a.s.\ a closed interval,
so the range dimension is the same after the time change.

\subsection{CTRW with iid jumps: The coupled case}

In some applications, it is natural to consider a coupled CTRW where
$\{(J_n,W_n), n \ge 1\}$ are iid, but the jump $J_n$ depends on the preceding waiting
time $W_n$.  We can also extend the results of the last section to certain
coupled CTRW limits.  In the coupled case, the CTRW $S(N_t)$ has scaling limit $Y(E_t-)$ and the
so-called oracle CTRW $S(N_t+1)$ has scaling limit $Y(E_t)$, see \cite{HenryStraka,OCTRW}.
If $Y,D$ are independent, they have a.s.\ no simultaneous jumps, and the two limit processes are the same.
The proof of Theorem \ref{Th:Hdim2} extends immediately to the process $Y(E_t-)$ in this case, with the same
dimension results.  This is because the graphs of  $Z(x)= (D(x), Y(x))$ and $Z'(x)= (D(x), Y(x-))$
have the same Hausdorff and packing dimension, as they differ by at most a countable number of discrete points.
In the following, we discuss examples for $Y(E_t)$, with the understanding that the same dimension results hold for $Y(E_t-)$.

The simplest case is $W_n=J_n$, in which case $X(t)=D(E_t)$.
This process is self-similar with index $1$, see for example Becker-Kern et
al.\ \cite{coupleCTRW}.
It follows from Theorems \ref{Th:Hdim}, \ref{Th:Hdim2} and the fact that for
any constant $a > 0$,
$$\dimh D([0, a]) = \dimh \{(D(x), D(x)): x \in [0, a]\} = \beta, \quad \hbox{a.s.}$$
that
\begin{equation}\label{Eq:coupledim}
\dimh X([0, 1])= \dimp X([0, 1]) =\beta, \quad a.s.
\end{equation}
and
\begin{equation}\label{Eq:coupledim-g}
\dimh {\rm Gr}X([0, 1])= \dimp {\rm Gr}X([0, 1])= 1, \quad a.s.
\end{equation}
We should also mention that by applying the ``uniform'' Hausdorff and packing
dimension results for the $\beta$-stable subordinator $D$ (see Perkins
and Taylor \cite{PT87}), which states that almost surely
$$ \dimh D(F) = \beta \dimh F\ \hbox{ and } \dimp D(F) = \beta \dimp F \
\hbox{ for all Borel sets } F \subseteq \R,$$
we obtain \eqref{Eq:coupledim} directly by choosing $F = [0, E_1)$.

Shlesinger et al.\ \cite{SKW} consider a CTRW where the waiting times $W_n\geq 0$
are iid with the $\beta$-stable random variable $D$ and $\E(e^{- s D}) = e^{-s^{\beta}}$
and, conditional on $W_n=t$, the jump $J_n$ is normal with mean zero and variance $2t$.
Then $J_n$ is symmetric stable with index $\alpha=2\beta$.  This model was applied to stock market prices
by Meerschaert and Scalas \cite{coupleEcon}. Becker-Kern et al.\
\cite{coupleCTRW} show that the CTRW limit is $X(t)=Y(E_t)$ ($t \ge 0$), where $Y$ is a
real-valued stable L\'evy process with index $\alpha = 2\beta$ and $E_t$ is the
inverse of a $\beta$-stable subordinator.  Then $X(t)$ is self-similar with
index $1/2$, the same as Brownian motion. However, the Hausdorff dimensions of the
range and graph of $X$ are completely different than those for Brownian motion.

Note that here $E_t$ is not independent of $Y(t)$.
Theorem \ref{Th:Hdim} gives that $\dimh X([0, 1])$ $= \min\{1, 2\beta\}$ a.s.
To determine the Hausdorff dimension of the graph of $X(t)$, we first verify that the
Fourier-Laplace transform of $(D(1), Y(1))$ is
\[
\begin{split}
\E\Big(e^{i\xi Y(1)-\eta D(1)}\Big) &= \E\Big[e^{-\eta D(1)} \E\big(e^{i\xi Y(1)}|D(1)\big)\Big]\\
&= \E\Big(e^{-(\eta+\xi^2) D(1)}\Big) = e^{- (\eta +\xi^2)^\beta}.
\end{split}
\]
It follows that the L\'evy process $Z(x) = (D(x), Y(x))$ is operator
stable \cite{RVbook} with the unique exponent
\begin{equation}\label{JordanBlock2}
C=\begin{pmatrix} \beta^{-1}&0\\ 0& (2\beta)^{-1}\end{pmatrix},
\end{equation}
which has eigenvalues $\beta^{-1}$ and $(2\beta)^{-1}$.

By applying Theorem 3.2 from Meerschaert and Xiao \cite{MX05}, we derive that
for any $a > 0$,
\begin{equation}\label{Eq:G1c}
\dimh Z([0, a]) = \dimp Z([0, a]) =\left\{\begin{array}{ll}
2\beta \qquad \ &\hbox{ if }   2 \beta \le 1,\\
\frac 1 2 +\beta   &\hbox{ if }  2 \beta > 1,\\
\end{array}
\right. \qquad \hbox{a.s.}
\end{equation}
Consequently, we use Theorem \ref{Th:Hdim2} to derive
\begin{equation}\label{Eq:G1de}
\dimh {\rm Gr}X([0, 1])= \dimp {\rm Gr}X([0, 1])=\max\{1,\beta+\tfrac 12\},
\quad \hbox{a.s.},
\end{equation}
which is quite different from the corresponding result \eqref{Eq:G1b}
in the uncoupled case.

\subsection{CTRW with iid jumps: Triangular array limits}

Proposition \ref{prop:Levy} and (\ref{Eq:G1de}) rely on the Hausdorff and packing
dimension results for sample functions of stable or, more generally, operator
stable L\'evy processes. The Hausdorff dimensions and potential theoretic properties
of general L\'evy processes have been studied by several authors (\cite{Pruitt69,
khoshnevisan-xiao,KXZ})
and the packing dimension results have been proved by Khoshnevisan and Xiao \cite{KX08}
and Khoshnevisan, Schilling and Xiao \cite{KSX10}. These results are useful for studying
fractal properties of the CTRW limits under more general settings such as triangular array
schemes.

In particular, for any L\'evy process $Z = \{Z(x), x \ge 0\}$ with values
in $\R^p$ and characteristic exponent $\Phi$ (i.e., $\E(e^{i \l \xi, Z(x)\r})
= e^{-x \Phi(\xi)}$), Corollary 1.8 in \cite{KXZ} shows that for any $a > 0$,
\begin{equation}\label{Eq:KX03}
\dimh Z([0, a]) = \sup\Bigg\{\gamma < p: \int_{\{\xi\in \R^p:\ \|\xi\|\ge 1\}}
\Re\bigg(\frac{1} {1 + \Phi(\xi)}\bigg)
\frac{d \xi}{\|\xi\|^\gamma} < \infty\Bigg\}, \ \ \hbox{a.s.}
\end{equation}
where $\|\cdot\|$ is the Euclidean norm on $\R^p$. On the other hand, Theorem 1.1
in Khoshnevisan and Xiao \cite{KX08} shows that for any $a > 0$,
\begin{equation}\label{Eq:KX08}
\dimp Z([0, a]) =  \sup\left\{\gamma \ge 0:\ \liminf_{r\to 0^+}
        \frac{W(r)}{r^\gamma}  =0\right\},  \ \hbox{a.s.},
\end{equation}
where, for all $r > 0$, $W(r)$ is defined by
\[
W(r) = \int_{\R^p} \Re\bigg(\frac 1 {1+\Phi(\xi/r)}\bigg) \prod_{j=1}^p\frac{1}
{1 +\xi_j^2} \, d\xi.
\]
(More precisely, Theorem 1.1 in \cite{KX08} is proved for $a=1$, but its proof works for
arbitrary $a > 0$. Another way to get (\ref{Eq:KX08}) from  Thorem 1.1 in \cite{KX08}
is to use the stationarity of increments of $Z$.)

By combining (\ref{Eq:KX03}) and (\ref{Eq:KX08}) with Theorems \ref{Th:Hdim} and
\ref{Th:Hdim2} we extend the results in the previous sections to more general
time-changed processes.

\begin{prop}\label{prop:Levy-G}
Let $X=\{Y(E_t), t \ge 0\}$, where $Y=\{Y(x):\ x\geq 0\}$ is a L\'evy process with values
in $\R^d$ and characteristic exponent $\psi$ and let $E_t$ be the inverse of a
subordinator $D = \{D(x), x \ge 0\}$ with characteristic exponent $\sigma$. If $Z
=\big\{(D(x), Y(x)), x \ge 0\big\}$ is a L\'evy process in $\R^{1+d}$ and its characteristic
exponent $\Phi$ satisfies
\begin{equation}\label{Eq:PhiCom}
K^{-1}\, \Re\bigg(\frac{1} {1 + \si(\eta) + \psi(\xi)}\bigg) \le \Re\bigg(\frac{1}
{1 + \Phi(\eta, \xi)}\bigg) \le K\, \Re\bigg(\frac{1} {1 + \si(\eta) + \psi(\xi)}\bigg)
\end{equation}
for all $(\eta, \xi )\in \R^{1+d}$ with $|\eta|+\|\xi\| $ large, where $K\ge 1$
is a constant. Then almost surely,
\[
\dimh X([0, 1]) = \sup\Bigg\{\gamma < d: \int_{\{\xi\in \R^d:\ \|\xi\|\ge 1\}}
\Re\bigg(\frac{1} {1 + \psi(\xi)}\bigg)
\frac{d \xi}{\|\xi\|^\gamma} < \infty\Bigg\}
\]
and $\dimh {\rm Gr}X([0, 1]) = \max \{1, \chi\}$ almost surely, where
\[
\chi = \sup\Bigg\{\gamma < 1+d: \int_{\{|\eta|+\|\xi\|\ge 1\}}
\Re\bigg(\frac{1} {1 + \sigma(\eta)+\psi(\xi)}\bigg)
\frac{d \eta d \xi}{(|\eta|+ \|\xi\|)^\gamma} < \infty\Bigg\}.
\]
\end{prop}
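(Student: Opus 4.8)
The plan is to reduce Proposition \ref{prop:Levy-G} to the general dimension results of Theorems \ref{Th:Hdim} and \ref{Th:Hdim2} by verifying their hypotheses \eqref{Eq:Y-Con} and \eqref{Eq:Y-Con2} through the L\'evy-process dimension formulas \eqref{Eq:KX03} and \eqref{Eq:KX08}. First I would record that, since $E_t$ is the inverse of a strictly increasing subordinator, Corollary \ref{Cor:Hdim} applies, so that $E_1 > 0$ a.s.\ and the process $D$ of \eqref{Eq:D} agrees (up to the conventions there) with the given subordinator. The outer process $Y$ is a L\'evy process in $\R^d$ with exponent $\psi$, so \eqref{Eq:KX03} gives $\dimh Y([0,a])$ as the stated supremum over $\gamma$, and this value does not depend on $a$; this verifies the first half of \eqref{Eq:Y-Con}. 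Feeding this into Theorem \ref{Th:Hdim} yields the asserted formula for $\dimh X([0,1])$.

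For the graph, the key point is to compute $\dimh Z([0,a])$ for the L\'evy process $Z(x) = (D(x), Y(x))$ in $\R^{1+d}$, using \eqref{Eq:KX03} with $p = 1+d$ and characteristic exponent $\Phi$. The formula \eqref{Eq:KX03} then reads
\[
\dimh Z([0,a]) = \sup\Bigg\{\gamma < 1+d:\ \int_{\{|\eta|+\|\xi\|\ge 1\}}
\Re\bigg(\frac{1}{1+\Phi(\eta,\xi)}\bigg)\frac{d\eta\, d\xi}{(|\eta|+\|\xi\|)^\gamma} < \infty\Bigg\}.
\]
The role of the two-sided bound \eqref{Eq:PhiCom} is precisely to replace the (possibly complicated or only implicitly known) real part $\Re\big(1/(1+\Phi(\eta,\xi))\big)$ by the explicit comparison quantity $\Re\big(1/(1+\sigma(\eta)+\psi(\xi))\big)$ without changing the convergence/divergence of the integral: since the bound holds for $|\eta|+\|\xi\|$ large and the two integrands differ only by a bounded multiplicative factor $K^{\pm 1}$ there, while the region $\{|\eta|+\|\xi\|\ge 1\}$ near its inner boundary contributes a finite amount regardless, the supremum defining $\dimh Z([0,a])$ equals $\chi$. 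Thus $\dimh Z([0,a]) = \chi$ for every $a > 0$, which is the first half of \eqref{Eq:Y-Con2}, and Theorem \ref{Th:Hdim2} delivers $\dimh {\rm Gr}X([0,1]) = \max\{1,\chi\}$.

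The packing-dimension statements (which the proposition as displayed does not include but which one expects in the full paper) would be handled the same way: \eqref{Eq:KX08} gives $\dimp Y([0,a])$ and $\dimp Z([0,a])$ as $a$-independent constants $c_2$ and $c_6$, and one again uses \eqref{Eq:PhiCom} — now inside the definition of $W(r)$ — to see that the $\liminf_{r\to 0^+} W(r)/r^\gamma$ criterion is unchanged when $\Re\big(1/(1+\Phi(\xi/r))\big)$ is replaced by the comparison expression, since for $r$ small the argument $\xi/r$ is large wherever the Gaussian-type weight $\prod_j (1+\xi_j^2)^{-1}$ is not already negligible. Then Theorems \ref{Th:Hdim} and \ref{Th:Hdim2} give the packing-dimension formulas with $\psi$, respectively $\sigma+\psi$, in place of $\Phi$.

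I expect the main obstacle to be the careful justification that \eqref{Eq:PhiCom}, which is only assumed for $|\eta|+\|\xi\|$ large, genuinely suffices to equate the two suprema (and the two $\liminf$ criteria): one must check that the low-frequency part of each integral is finite and harmless for the relevant range of $\gamma$, and that $\Re\big(1/(1+\Phi)\big)$ and $\Re\big(1/(1+\sigma+\psi)\big)$ are both nonnegative and locally integrable so that splitting the integral is legitimate. A secondary, more bookkeeping-type point is confirming that the $Z$ built from the inverse-subordinator construction in \eqref{Eq:D} coincides in law with the L\'evy process $(D(x),Y(x))$ named in the hypothesis, so that the cited formulas apply verbatim; this is immediate from the standing assumption that $E_t$ is the inverse of $D$ and the remarks following \eqref{Eq:D}.
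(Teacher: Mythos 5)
Your proposal is correct and follows exactly the route the paper intends: the paper states Proposition \ref{prop:Levy-G} as an immediate consequence of combining the L\'evy-process dimension formula \eqref{Eq:KX03} with Theorems \ref{Th:Hdim} and \ref{Th:Hdim2} (via Corollary \ref{Cor:Hdim}), and gives no further proof. Your additional care about why the comparison \eqref{Eq:PhiCom}, valid only for $|\eta|+\|\xi\|$ large, does not affect the convergence criterion (boundedness and nonnegativity of $\Re\big(1/(1+\Phi)\big)$ on the compact transition region) is a legitimate and correctly handled detail that the paper leaves implicit.
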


The packing dimensions of $X([0, 1])$ and ${\rm Gr}X([0, 1])$ are given as follows,
which may be different from the Hausdorff dimensions given in Proposition
\ref{prop:Levy-G}.
\begin{prop}\label{prop:Levy-Gp}
Let $X=\{Y(E_t), t \ge 0\}$ be the same as in Proposition \ref{prop:Levy-G}.
If $Z =\big\{(D(x), Y(x)), x \ge 0\big\}$ is a L\'evy process in
$\R^{1+d}$ and its characteristic exponent $\Phi$ satisfies (\ref{Eq:PhiCom}),
then
\[
\dimp X([0, 1]) = \sup\left\{\gamma \ge 0:\ \liminf_{r\to 0^+}
        \frac{W(r)}{r^\gamma}  =0\right\},  \quad \hbox{ a.s.},
\]
where $W(r)$ is defined by
\[
W(r) = \int_{\R^d} \Re\bigg(\frac 1 {1+\psi(\xi/r)}\bigg) \prod_{j=1}^d\frac{1}
{1 +\xi_j^2} \, d\xi,
\]
and $
\dimp {\rm Gr}X([0, 1]) = \max \{1, \chi'\}$ almost surely, where
\[
\chi' = \sup\left\{\gamma \ge 0:\ \liminf_{r\to 0^+}
        \frac{\widetilde{W}(r)}{r^\gamma}  =0\right\}
\]
and where
\[
\widetilde{W}(r) = \int_{\R^{1+d}} \Re\bigg(\frac 1 {1+\sigma(\eta/r)+\psi(\xi/r)}\bigg)
\frac{1}
{1 +\eta^2} \prod_{j=1}^d\frac{1}
{1 +\xi_j^2} \, d\eta\,d\xi.
\]
\end{prop}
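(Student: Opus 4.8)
\medskip
\noindent\emph{Plan of proof.} The idea is to reduce both identities to Theorems~\ref{Th:Hdim} and \ref{Th:Hdim2}---which apply here via Corollary~\ref{Cor:Hdim}, since $E_t$ is the inverse of a strictly increasing subordinator and so $E_1>0$ a.s.---exactly as in the proof of Proposition~\ref{prop:Levy-G}, but feeding in the packing-dimension formula \eqref{Eq:KX08} of Khoshnevisan and Xiao \cite{KX08} in place of the Hausdorff-dimension formula \eqref{Eq:KX03}. For the range, $Y$ is a L\'evy process in $\R^d$ with characteristic exponent $\psi$, so \eqref{Eq:KX08} (with $p=d$ and $\Phi=\psi$) shows that, for \emph{every} $a>0$, $\dimp Y([0,a])$ equals the deterministic quantity $\sup\{\gamma\ge0:\ \liminf_{r\to0^+}W(r)/r^\gamma=0\}$ with $W$ as in the statement; hence the packing part of hypothesis \eqref{Eq:Y-Con} holds and Theorem~\ref{Th:Hdim} gives $\dimp X([0,1])=\dimp Y([0,1])$, which is the claimed expression.

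For the graph, $Z=\{(D(x),Y(x)),\ x\ge0\}$ is by hypothesis a L\'evy process in $\R^{1+d}$ with characteristic exponent $\Phi$, and \eqref{Eq:KX08} with $p=1+d$ gives, for every $a>0$,
\[
\dimp Z([0,a]) = \sup\Bigl\{\gamma\ge0:\ \liminf_{r\to0^+}\frac{W_Z(r)}{r^\gamma}=0\Bigr\}\quad\text{a.s.},
\]
where $W_Z(r)=\int_{\R^{1+d}}\Re\bigl(1/(1+\Phi(\eta/r,\xi/r))\bigr)(1+\eta^2)^{-1}\prod_{j=1}^d(1+\xi_j^2)^{-1}\,d\eta\,d\xi$. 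This is again independent of $a$, so the packing part of \eqref{Eq:Y-Con2} holds and Theorem~\ref{Th:Hdim2} yields $\dimp{\rm Gr}X([0,1])=\max\{1,\dimp Z([0,1])\}$ a.s. It remains only to show that the constant $\dimp Z([0,1])$ equals $\chi'$, i.e.\ that in the display above $W_Z$ may be replaced by $\widetilde W$, in which $\Phi(\eta/r,\xi/r)$ is replaced by $\sigma(\eta/r)+\psi(\xi/r)$, without changing the supremum.

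This is where \eqref{Eq:PhiCom} enters, and it is the only non-routine step. Choose $M$ so that the two-sided bound in \eqref{Eq:PhiCom} holds for $|\eta|+\|\xi\|\ge M$, and split each of $W_Z(r)$ and $\widetilde W(r)$ over the regions $|\eta|+\|\xi\|\ge Mr$ and $|\eta|+\|\xi\|<Mr$. On the first region the integrands of $W_Z$ and $\widetilde W$ differ by at most the factor $K$, by \eqref{Eq:PhiCom}; on the second region both integrands are bounded by $1$ (since $\Re\Phi\ge0$, $\Re(\sigma+\psi)\ge0$, and the remaining density factor is $\le1$), so the contribution of $|\eta|+\|\xi\|<Mr$ to each of $W_Z(r)$ and $\widetilde W(r)$ is $O(r^{1+d})$. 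Hence for every $\gamma\in[0,1+d)$ these low-frequency parts divided by $r^\gamma$ tend to $0$, and therefore $\liminf_{r\to0^+}W_Z(r)/r^\gamma=0$ if and only if $\liminf_{r\to0^+}\widetilde W(r)/r^\gamma=0$. Each of the two level sets $\{\gamma\ge0:\liminf(\cdot)/r^\gamma=0\}$ is an interval containing $0$ (vanishing of the liminf for $\gamma$ implies it for every smaller exponent, after multiplying by $r^{\gamma-\gamma'}\to0$), and a change of variables $(\eta,\xi)=(ru,rv)$ together with Fatou's lemma gives $\liminf_{r\to0^+}r^{-(1+d)}\widetilde W(r)\ge\int_{\R^{1+d}}\Re\bigl(1/(1+\sigma(u)+\psi(v))\bigr)\,du\,dv>0$, so $\chi'\le1+d$, while $\dimp Z([0,1])\le1+d$ by \eqref{Eq:dim-rel}; the agreement of the two level sets on $[0,1+d)$ then forces $\chi'=\dimp Z([0,1])$ a.s. Combining the pieces gives $\dimp{\rm Gr}X([0,1])=\max\{1,\chi'\}$. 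The main obstacle is this comparison step: one must check that the low-frequency portions of $W_Z$ and $\widetilde W$ are negligible at the level of the suprema defining the packing dimensions, not merely of the integrals $W_Z$, $\widetilde W$ themselves.
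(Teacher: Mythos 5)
Your proof is correct and follows exactly the route the paper intends (the paper states Proposition \ref{prop:Levy-Gp} without a written proof, saying only that it follows by combining \eqref{Eq:KX08} with Theorems \ref{Th:Hdim} and \ref{Th:Hdim2}). Your only addition is to carry out explicitly the comparison step replacing $\Phi(\eta/r,\xi/r)$ by $\sigma(\eta/r)+\psi(\xi/r)$ in the integral via \eqref{Eq:PhiCom}, which the paper leaves implicit and which you handle correctly by splitting at $|\eta|+\|\xi\|=Mr$ and checking the low-frequency part is $O(r^{1+d})$, hence negligible for every $\gamma<1+d$.
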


Next we consider a generalized CTRW limit, as in \cite{M-S-triangular},
obtained by using a triangular array scheme.  This limit can be applied as a stochastic
model for ultraslow diffusion (cf. \cite{M-S-ultra, CGS}).

At each scale $c>0$ we are given iid waiting times $(W_n^{c})$ and iid jumps $(J_n^{c})$.
Assume the waiting times and jumps form triangular arrays whose row sums converge in distribution.
More specifically, let $S^{c}(n)=J_1^{c}+\cdots+J_n^{c}$ and $T^{c}(n)=W_1^{c}+\cdots+W_n^{c}$,
we require that
$S^{c}(cu)\Rightarrow Y(t)$ and $T^{c}(cu)\Rightarrow D(t)$ as $c\to\infty$, where the limits
$Y(t)$ and $D(t)$ are independent L\'evy processes.
Letting $N^{c}_t=\max\{n\geq 0:T^{c}(n)\leq t\}$, the CTRW scaling limit $S^{c}(N_t^{c})
\Rightarrow Y(E_t)$ \cite[Theorem 2.1]{M-S-triangular}.

A power law mixture model for waiting times was proposed in \cite{M-S-ultra}:  Take
an iid sequence of random variables $\{B_i\}$ with $0<B_i<1$ and assume
$\P\{W_i^{c}>u|B_i=\beta\}=c^{-1}u^{-\beta}$ for $u\geq c^{-1/\beta}$, so that
the waiting times are power laws conditional on the mixing variables. The waiting
time process $T^{c}(cu)\Rightarrow D(t)$, which is a subordinator with Laplace
transform ${\mathbb E}[e^{-s D(t)}]=e^{-t\psi_D(s)}$, where \eqref{psiD2} holds.
The L\'evy measure of $D$ is given by
\begin{equation}\label{psiWdef}
\nu_D(t,\infty)=\int_0^1 t^{-\beta}\mu(d\beta),
\end{equation}
where $\mu$ is the distribution of the mixing variable \cite[Theorem 3.4 and Remark 5.1]{M-S-ultra}.
A computation \cite[Eq.\ (3.18)]{M-S-ultra} using
$\int_0^\infty(1-e^{-s t})\beta
t^{-\beta-1}dt=\Gamma(1-\beta)s^\beta$ shows that
\begin{equation}\begin{split}\label{psiW}
\psi_D(s)
&= \int_0^1  s^\beta \Gamma(1-\beta) \mu(d\beta) .
\end{split}\end{equation}
Then $c^{-1}N^{c}_t\Rightarrow E_t$ the inverse subordinator \cite[Theorem 3.10]{M-S-ultra}.

Now we take $\mu(d\beta)=\sum_{k=1}^n d_k^{\beta_k}(\Gamma(1-\beta_k))^{-1}\delta_{\beta_k}(d\beta)$,
where $0<\beta_1<\beta_2<\cdots <\beta_n<1$ are constants and $\delta_a$ is the unit mass at $a$. In
this case, the subordinator is $D(t)=\sum_{k=1}^n d_k D_k(t)$, which is a mixture of
independent $\beta_k$-stable subordinators $D_k(t)$ ($k=1,\cdots, n$). See Chechkin et al.\
\cite{CGS} for some applications of such CTRW and its scaling limit  $X(t) = Y(E_t)$.

In order to apply Propositions \ref{prop:Levy-G} and \ref{prop:Levy-Gp} to establish
Hausdorff and packing dimension results for the above time-changed process $X$,
we will make use of the following technical result.

\begin{lemma}\label{inverse-mixture-lemma}
Let $D(x)=\sum_{k=1}^n d_k D_k(x)$, where $d_k>0$ are constants and $D_k(x)$ are
independent stable subordinators of index $\beta_k$ and $0<\beta_1<\beta_2<\cdots <\beta_n<1$.
Let $Y = \{Y(x), x \ge 0\}$ be a strictly stable L\'evy motion of index $\alpha \in (0, 2]$
with values in $\R^d$. We assume $D$ and $Y$ are independent and let $\Phi$ be the
characteristic exponent of the L\'evy process $Z(x) = (D(x), Y(x))$. Then for all $(\eta,
\xi)\in \R^{1+d}$  that satisfies $|\eta| + \|\xi\| >1$
\begin{equation}\label{Eq:Exp1}
\frac{K^{-1}}{|\eta|^{\beta_n}+\|\xi\|^\alpha}
\leq \Re\bigg(\frac 1{1+\Phi(\eta,\xi)}\bigg)\leq \frac{K}{ |\eta|^{\beta_n}+ \|\xi\|^\alpha},
\end{equation}
where $K \ge 1$ is a constant which may depend on $n, \alpha$, $\beta_k, d_k$ for $k=1,2,\cdots, n$.
\end{lemma}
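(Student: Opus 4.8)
The plan is to compute the characteristic exponent $\Phi(\eta,\xi)$ of $Z(x)=(D(x),Y(x))$ explicitly using independence, and then reduce the two-sided bound \eqref{Eq:Exp1} to elementary estimates on $\Re(1/(1+z))$ for complex $z$ with large real part. First I would note that, by independence of $D$ and $Y$,
\[
\Phi(\eta,\xi) = \sigma(\eta) + \psi(\xi), \qquad \sigma(\eta) = \sum_{k=1}^n \psi_{D_k}(-i\eta),
\]
where $\psi_{D_k}$ is the Laplace exponent of the $\beta_k$-stable subordinator $d_k D_k$, so that $\sigma(\eta) = c\sum_{k=1}^n d_k^{\beta_k} |\eta|^{\beta_k}\bigl(1 - i\,\mathrm{sgn}(\eta)\tan(\pi\beta_k/2)\bigr)$ for an explicit constant $c$, and $\psi(\xi)$ is the characteristic exponent of the strictly $\alpha$-stable $Y$, which satisfies $\Re\,\psi(\xi) \asymp \|\xi\|^\alpha$ and $|\psi(\xi)| \asymp \|\xi\|^\alpha$. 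The key structural facts are: $\Re\,\sigma(\eta)\asymp \sum_k |\eta|^{\beta_k}$, $|\sigma(\eta)| \asymp \sum_k |\eta|^{\beta_k}$, and the real parts are nonnegative.

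Next I would establish the elementary lemma that for $z = u+iv$ with $u \ge 0$ one has
\[
\Re\Bigl(\frac{1}{1+z}\Bigr) = \frac{1+u}{(1+u)^2 + v^2},
\]
so $\Re(1/(1+z)) \asymp (1+|z|)^{-1}$ whenever $|v| \le A(1+u)$ for a fixed constant $A$; in our situation $|\Phi(\eta,\xi)| \le C\,\Re\,\Phi(\eta,\xi)$ for $|\eta|+\|\xi\|$ bounded away from $0$ is not quite available for small $\eta$ because $\tan(\pi\beta_k/2)$ stays bounded but the relation between $\Re$ and modulus of $\sigma$ is uniform, and for $\psi$ strict stability gives a uniform comparison. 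Combining these, $\Re(1/(1+\Phi(\eta,\xi))) \asymp (1 + |\eta|^{\beta_n} + \|\xi\|^\alpha)^{-1}$ on the region $|\eta|+\|\xi\| > 1$, since in that region $\sum_{k=1}^n |\eta|^{\beta_k} \asymp \max\{1,|\eta|^{\beta_n}\}$ — here I use $\beta_1 < \cdots < \beta_n$, so the largest exponent dominates for $|\eta|$ large while the sum stays bounded below for $|\eta|$ small. Finally, on $|\eta|+\|\xi\|>1$ we have $1 + |\eta|^{\beta_n} + \|\xi\|^\alpha \asymp |\eta|^{\beta_n} + \|\xi\|^\alpha$ (the constant $1$ can be absorbed when one of the two terms is already $\gtrsim 1$, which holds on this region), giving exactly \eqref{Eq:Exp1}.

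The main obstacle is bookkeeping the constants uniformly across the whole region $\{|\eta| + \|\xi\| > 1\}$: one must handle separately the regimes where $|\eta|$ is large (so $\sum_k |\eta|^{\beta_k}\asymp |\eta|^{\beta_n}$ and the imaginary part of $\sigma$ is controlled by the real part) and where $|\eta|$ is small but $\|\xi\|$ is large (so the $\psi(\xi)$ term carries the estimate and the $\sigma(\eta)$ term is a bounded perturbation, again with $|\mathrm{Im}\,\psi| \lesssim \Re\,\psi$ by strict stability), as well as the transitional regime. I would organize this as a short case analysis, in each case bounding $|\Phi|$ above and $\Re\,\Phi$ below by comparable multiples of $|\eta|^{\beta_n} + \|\xi\|^\alpha$, and then invoking the elementary $\Re(1/(1+z))$ estimate. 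No delicate analysis is needed beyond these comparisons; the whole argument is a routine but careful manipulation of stable characteristic exponents, and the dependence of $K$ on $n,\alpha,\beta_k,d_k$ is transparent from the constants that appear.
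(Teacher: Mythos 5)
Your proposal is correct and follows essentially the same route as the paper: compute $\Phi(\eta,\xi)$ explicitly as a sum of stable exponents, observe that the imaginary part is dominated by a constant multiple of the (nonnegative) real part so that $\Re(1/(1+\Phi))\asymp(1+\Re\Phi)^{-1}$, and then compare $1+\sum_k|d_k\eta|^{\beta_k}+\|\xi\|^\alpha$ with $|\eta|^{\beta_n}+\|\xi\|^\alpha$ on the region $|\eta|+\|\xi\|>1$. The only differences are cosmetic: the paper normalizes $\psi(\xi)=\|\xi\|^\alpha$ ``for simplicity'' and leaves the final comparison as elementary, while you carry the general strictly stable $\psi$ through and spell out that last step (your parenthetical claim $\sum_k|\eta|^{\beta_k}\asymp\max\{1,|\eta|^{\beta_n}\}$ fails as $|\eta|\to0$, but the way it is actually used, inside $1+\cdots$, is fine).
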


\begin{proof}\,
For simplicity, we assume that the characteristic exponent of $Y$
is $\psi(\xi) = \|\xi\|^\alpha$. Then the L\'evy process $Z(x)=(D(x), Y(x))$ has
characteristic exponent
\begin{equation}
\begin{split}
\Phi(\eta, \xi)&=\sum_{k=1}^n \big(-id_k\eta\big)^{\beta_k}+\|\xi\|^{\alpha}\\
&=\sum_{k=1}^n |d_k\eta|^{\beta_k}[\cos(\pi\beta_k/2)-i \sin(\pi\beta_k/2) ]
+\|\xi\|^{\alpha}\\
&=: f(\eta, \xi) -i g(\eta).
\end{split}
\end{equation}
Since $\beta_k \in (0, 1)$, we have $f(\eta, \xi) \ge 0$
for all $\eta, \xi \in \R^{1+d}$. Moreover, $0 \le g(\eta) \le K f(\eta, \xi)$
for some constant $K > 0$. Hence
\[
\frac{1}{(1+K^2)(1+f(\eta, \xi))} \le \Re\bigg(\frac 1{1+\Phi(\eta, \xi)}\bigg) \leq
 \frac{1}{1+f(\eta, \xi)}.
\]
From here it is elementary to verify (\ref{Eq:Exp1}).
\end{proof}

By using Lemma \ref{inverse-mixture-lemma}, Propositions \ref{prop:Levy-G} and
\ref{prop:Levy-Gp} we derive the following proposition. Since the proof is similar to
that of Proposition 4.1 in \cite{MX05} (see also Proposition 7.7 in
\cite{khoshnevisan-xiao}), we omit the details.

\begin{prop}\label{prop:Levy-1}
Let $X=\{Y(E_t), t \ge 0\}$, where $Y=\{Y(x):\ x\geq 0\}$ is a strictly
stable L\'evy motion of index $\alpha \in (0, 2]$ with values
in $\R^d$ and $E_t$ is the inverse of a subordinator $D(t)=\sum_{j=1}^n d_k D_k(t)$,
where $d_k>0$ and $D_k(t)$ are independent stable subordinators of index $\beta_k$
and  $0<\beta_1<\beta_2<\cdots \beta_n<1$. Suppose also that $E$ is
independent of $Y$. Then
\begin{equation}\label{Eq:Range1f}
\dimh X([0, 1]) = \dimp X([0, 1])= \min\{d, \alpha\}, \qquad \hbox{a.s.}
\end{equation}
and
\begin{equation}\label{Eq:G1f}
\begin{split}
\dimh {\rm Gr}X([0, 1]) &= \dimp {\rm Gr}X([0, 1])\\
&= \left\{\begin{array}{ll}
 \max\{1, \alpha\}\qquad \ &\hbox{ if } \alpha \le d,\\
 1+\beta_n(1 - \frac 1 \alpha)  &\hbox{ if }  \alpha > d =1,\\
\end{array}
\right. \qquad \hbox{a.s.}
\end{split}
\end{equation}
\end{prop}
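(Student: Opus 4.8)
The plan is to deduce Proposition~\ref{prop:Levy-1} from Propositions~\ref{prop:Levy-G} and \ref{prop:Levy-Gp}, using Lemma~\ref{inverse-mixture-lemma} to identify the relevant integrals. First I would verify the hypothesis \eqref{Eq:PhiCom}: with $\psi(\xi) = \|\xi\|^\alpha$ the characteristic exponent of $Y$ and $\sigma(\eta) = \sum_{k=1}^n(-id_k\eta)^{\beta_k}$ that of $D$, one has $\Re(1 + \sigma(\eta) + \psi(\xi)) \asymp 1 + |\eta|^{\beta_n} + \|\xi\|^\alpha$ for $|\eta|$ large (since $\beta_n$ dominates), and by Lemma~\ref{inverse-mixture-lemma} the same two-sided bound holds for $\Re(1/(1+\Phi(\eta,\xi)))$; comparing shows \eqref{Eq:PhiCom} holds with a suitable $K$. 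This reduces everything to estimating explicit integrals whose integrands are comparable to $(|\eta|^{\beta_n} + \|\xi\|^\alpha)^{-1}$ (up to the harmless weight factors $\prod(1+\xi_j^2)^{-1}$ near the origin, which do not affect convergence at infinity).

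Next I would compute $\dimh X([0,1]) = \dimp X([0,1])$. Proposition~\ref{prop:Levy-G} gives $\dimh X([0,1])$ as the supremum of $\gamma < d$ with $\int_{\|\xi\| \ge 1} \|\xi\|^{-\alpha}\|\xi\|^{-\gamma}\,d\xi < \infty$; passing to polar coordinates, this integral is finite iff $\alpha + \gamma > d$, i.e. iff $\gamma > d - \alpha$, so the supremum over $\gamma < d$ equals $\min\{d, \alpha\}$ (and is $0$ when $\alpha \ge d$, interpreted correctly since the dimension is nonnegative). The packing version from Proposition~\ref{prop:Levy-Gp} gives the same value: scaling $\xi \mapsto \xi/r$ in $W(r)$ shows $W(r) \asymp r^{\min\{d,\alpha\}}$ as $r \to 0^+$ by a standard computation (splitting the integral at $\|\xi\| \asymp r$), so $\liminf_{r\to0^+} W(r)/r^\gamma = 0$ exactly for $\gamma < \min\{d,\alpha\}$. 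This yields \eqref{Eq:Range1f}.

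For the graph, Proposition~\ref{prop:Levy-G} gives $\dimh \mathrm{Gr}X([0,1]) = \max\{1, \chi\}$ with $\chi$ the supremum of $\gamma < 1+d$ such that $\int_{|\eta|+\|\xi\| \ge 1}(|\eta|^{\beta_n} + \|\xi\|^\alpha)^{-1}(|\eta|+\|\xi\|)^{-\gamma}\,d\eta\,d\xi < \infty$. The critical exponent of such an anisotropic integral is governed by the quasi-norm $\rho(\eta,\xi) = |\eta|^{1/\beta_n} + \|\xi\|$, under which $|\eta|^{\beta_n} + \|\xi\|^\alpha$ scales like $\rho^{\alpha \wedge \beta_n \cdot(\text{appropriate power})}$; carrying out the homogeneity bookkeeping (the "homogeneous dimension" is $1/\beta_n + d$ when the $\eta$-direction is weighted by $1/\beta_n$) one finds, in the case $\alpha > d = 1$, that the integral converges iff $\gamma < 1 + \beta_n(1 - 1/\alpha)$, so $\chi = 1 + \beta_n(1-1/\alpha)$; when $\alpha \le d$ one gets $\chi = \alpha \le d$ and hence $\max\{1,\chi\} = \max\{1,\alpha\}$. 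The packing computation via $\widetilde W(r)$ in Proposition~\ref{prop:Levy-Gp} is the scaling counterpart and yields the identical value, giving \eqref{Eq:G1f}.

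The main obstacle is the anisotropic integral estimate for the graph dimension: unlike the isotropic range integral, the exponents $\beta_n$ (in the $\eta$-variable) and $\alpha$ (in the $\xi$-variable) enter with different homogeneities, so one must either introduce the correct quasi-metric $\rho$ and track its homogeneous dimension carefully, or split the domain into dyadic annuli $\{|\eta| \asymp 2^j\} \times \{\|\xi\| \asymp 2^k\}$ and sum a double geometric-type series, being careful about which variable dominates $|\eta|^{\beta_n} + \|\xi\|^\alpha$ in each region. Since this is precisely the computation already carried out in the proof of Proposition~4.1 of \cite{MX05} (and Proposition~7.7 of \cite{khoshnevisan-xiao}) for operator-stable L\'evy processes — the process $Z$ here being operator stable with exponent $\mathrm{diag}(\beta_n^{-1}, \alpha^{-1})$ after noting that the lower-order subordinators $D_k$ only affect $Z$ at infinity in a way controlled by Lemma~\ref{inverse-mixture-lemma} — I would simply invoke that argument rather than reproduce it, which is why the details are omitted.
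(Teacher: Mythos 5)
Your strategy is exactly the one the paper intends: verify \eqref{Eq:PhiCom} via Lemma \ref{inverse-mixture-lemma} (here essentially trivial, since $D$ and $Y$ are independent so $\Phi=\sigma+\psi$), then evaluate the integrals in Propositions \ref{prop:Levy-G} and \ref{prop:Levy-Gp} using $\Re\big(1/(1+\Phi(\eta,\xi))\big)\asymp(|\eta|^{\beta_n}+\|\xi\|^\alpha)^{-1}$; the paper itself omits all details, deferring to Proposition 4.1 of \cite{MX05}. Your packing-dimension computation for the range ($W(r)\asymp r^{\min\{d,\alpha\}}$ by splitting at $\|\xi\|\asymp r$) is correct, and your identification of the anisotropic dyadic-annuli estimate as the main technical point for the graph is the right diagnosis.

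There is, however, a genuine gap in the Hausdorff-dimension computation for the range: from ``the integral converges iff $\gamma>d-\alpha$'' the supremum over $\gamma<d$ is $d$, not $\min\{d,\alpha\}$, so your stated criterion does not yield the answer you assert (and the parenthetical about the value being $0$ when $\alpha\ge d$ is also not right). The root cause is that \eqref{Eq:KX03}, and hence Proposition \ref{prop:Levy-G}, carries the weight $\|\xi\|^{-\gamma}$ where Corollary 1.8 of \cite{KXZ} has $\|\xi\|^{-(d-\gamma)}$ (the Fourier form of the $\gamma$-energy); you have taken the misprinted formula at face value and then written down the known answer anyway. With the correct weight, $\int_{\|\xi\|\ge1}\|\xi\|^{-\alpha}\,\|\xi\|^{-(d-\gamma)}\,d\xi<\infty$ iff $\gamma<\alpha$, and the supremum over $\gamma<d$ is genuinely $\min\{d,\alpha\}$. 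The same correction, i.e.\ the weight $(|\eta|+\|\xi\|)^{-(1+d-\gamma)}$, is needed in the definition of $\chi$ before your dyadic computation gives $\chi=1+\beta_n(1-\frac1\alpha)$ for $\alpha>d=1$; note also that for $\alpha\le d$ one actually gets $\chi=\max\{\alpha,\beta_n\}$ rather than $\alpha$ (the subordinator component dominates when $\alpha<\beta_n$, cf.\ \eqref{Eq:G1a}), although this does not affect the conclusion since $\max\{1,\chi\}=\max\{1,\alpha\}$ either way.
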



\subsection{CTRW with correlated jumps}
Now consider an uncoupled CTRW whose jumps $\{J_n\}$ form a correlated sequence
of random variables, and whose waiting times $\{W_n\}$ are iid and belong to
the domain of attraction of a positive $\beta$-stable random variable $D(1)$.

We further assume that $\{J_n\}$ and $\{W_n\}$ are independent. In this case,
Meerschaert, et al.\ \cite{MNX09} show that, under certain conditions
on the correlation structure of $\{J_n\}$, the CTRW scaling limit is the
$(H\beta)$-self-similar process $X=\{Y(E_t): t\geq 0\}$,
where $Y$ is a fractional Brownian motion with index $H \in (0, 1)$,
and $E_t$ is the inverse of a $\beta$-stable subordinator $D$ which is
independent of $Y$.

The following proposition determines the Hausdorff and packing dimension of
the sample path of $X$.

\begin{prop}\label{Prop:FBM}
Let $X=\{Y(E_t), t \ge 0\}$, where $Y$ is a fractional
Brownian motion with values in $\R^d$ of index $H \in (0, 1)$ and
$E_t$ is the the inverse of a $\beta$-stable subordinator
$D$ which is independent of $Y$. Then
\begin{equation}\label{Eq:Range2}
\dimh X([0, 1]) = \dimp X([0, 1])= \min \Big\{d,\, \frac 1 H\Big\}, \qquad \hbox{a.s.}
\end{equation}
and
\begin{equation}\label{Eq:G2}
\begin{split}
\dimh {\rm Gr}X([0, 1]) &= \dimp {\rm Gr}X([0, 1])\\
& = \left\{
\begin{array}{ll}
\frac 1 H \qquad  &\hbox{ if } 1 \le H d,\\
\beta + (1 - H \beta) d  &\hbox{ if } 1 > H d,
\end{array}
\right. \qquad \hbox{a.s.}
\end{split}
\end{equation}
\end{prop}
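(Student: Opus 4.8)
The plan is to derive Proposition~\ref{Prop:FBM} from the general dimension formulas in Theorems~\ref{Th:Hdim} and~\ref{Th:Hdim2} together with known fractal dimension results for fractional Brownian motion and for the associated space-time process. First I would verify the hypotheses of the general theorems: since $E_t$ is the inverse of a $\beta$-stable subordinator $D$ with $\nu_D(0,\infty)=\infty$, Corollary~\ref{Cor:Hdim} applies, so it suffices to compute $\dimh Y([0,a])$, $\dimp Y([0,a])$, and $\dimh Z([0,a])$, $\dimp Z([0,a])$ for the process $Z(x)=(D(x),Y(x))$, for each fixed $a>0$. For the range of $X$, I would invoke the classical result that a $d$-dimensional fractional Brownian motion of index $H$ has $\dimh Y([0,a])=\dimp Y([0,a])=\min\{d,1/H\}$ a.s.\ (this follows from the uniform H\"older and local-nondeterminism properties of FBM; see Kahane~\cite{K85} or Xiao~\cite{X04}), and then Theorem~\ref{Th:Hdim} immediately yields~\eqref{Eq:Range2}.

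For the graph, the key input is the Hausdorff and packing dimension of the range of the $\R^{1+d}$-valued process $Z(x)=(D(x),Y(x))$ where $D$ is the $\beta$-stable subordinator and $Y$ is the independent FBM. I would cite (or establish) that almost surely, for every $a>0$,
\begin{equation*}
\dimh Z([0,a]) = \dimp Z([0,a]) = \left\{\begin{array}{ll}
\frac 1 H & \hbox{ if } 1 \le Hd,\\
\beta + (1-H\beta)d & \hbox{ if } 1 > Hd.
\end{array}\right.
\end{equation*}
The heuristic is that $Z$ behaves near a point like a time change: the $D$-coordinate contributes a ``fractal time'' of local index $\beta$ while the $d$ coordinates of $Y$ contribute index $H$ each; equivalently, one may view $Z([0,a])$ as (essentially) the graph of $Y$ over the fractal time set $D([0,a])$, which has dimension $\beta$ and over which $Y$ has uniform H\"older index $H$. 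Once this range dimension is in hand, Theorem~\ref{Th:Hdim2} gives $\dimh {\rm Gr}X([0,1]) = \max\{1,\dimh Z([0,1])\}$ and similarly for packing dimension, and since $\beta+(1-H\beta)d \ge 1$ automatically when $1>Hd$ (because then $(1-H\beta)d > (1-\beta)d \ge \ldots$, and a short check shows $\beta + (1-H\beta)d \ge \beta + 1 - H\beta > 1$), the $\max$ with $1$ can be absorbed exactly as written in~\eqref{Eq:G2}, with the two cases of~\eqref{Eq:G2} matching the two cases of the $Z$-range formula.

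I expect the main obstacle to be justifying the dimension formula for $\dimh Z([0,a])$, since $Z$ is neither self-similar in the usual scalar sense nor a L\'evy process (its $Y$-component is not independent-increment), so one cannot directly quote the operator-stable L\'evy results of Meerschaert--Xiao \cite{MX05} as was done in Proposition~\ref{prop:Levy}. The cleanest route is probably a direct argument: for the upper bound, use the uniform modulus of continuity of $Y$ restricted to the random closed set $D([0,a])$ (which has box dimension $\beta$) together with a covering argument, splitting into the two regimes $1\le Hd$ and $1>Hd$; for the lower bound, use a capacity/energy argument, estimating $\E\big[\int\int |Z(x)-Z(y)|^{-\gamma}\,dx\,dy\big]$ by conditioning on $D$ and exploiting the strong local nondeterminism of FBM. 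Alternatively, and more in the spirit of the triangular-array subsection, one could observe that $(D(x),Y(x))$ — while $Y$ is FBM — still admits the two-sided estimate on $\Re(1+\Phi)^{-1}$-type quantities via a Fourier/energy comparison, reducing to a computation like that behind Propositions~\ref{prop:Levy-G} and~\ref{prop:Levy-Gp}; if FBM's analogue of the characteristic-exponent criterion is available in the literature (e.g.\ via the local times / Fourier dimension of FBM), the dimension of $Z([0,a])$ would then follow from the standard index computation $\sup\{\gamma: \text{integral converges}\}$ with the effective exponents $\beta$ (in the $\eta$-direction) and $H$ (in each $\xi_j$-direction). I would present the argument assuming the range-dimension result for $Z$ as a cited or previously-established fact, and devote the bulk of the proof to the (routine) verification that Theorems~\ref{Th:Hdim} and~\ref{Th:Hdim2} then deliver exactly~\eqref{Eq:Range2} and~\eqref{Eq:G2}.
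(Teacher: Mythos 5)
Your proposal follows essentially the same route as the paper: reduce \eqref{Eq:Range2} to the classical range-dimension result for fractional Brownian motion via Theorem~\ref{Th:Hdim}, and reduce \eqref{Eq:G2} via Theorem~\ref{Th:Hdim2} to the Hausdorff and packing dimension of $Z([0,a])=\{(D(x),Y(x)):x\in[0,a]\}$, which is exactly the formula you state. The one caveat is that this $Z$-dimension formula cannot be ``cited as a previously-established fact'' --- as you yourself observe, $Z$ is neither a L\'evy process nor scalar self-similar, so the bulk of the paper's proof (Lemmas~\ref{Lem:Up} and~\ref{Lem:Lw}) is devoted to proving it, and it does so by precisely your first ``direct argument'': for the upper bound, a first-moment covering argument at the two competing scales $2^{-Hn}$ and $2^{-n/\beta}$ (covering $D(I_{n,j})$ via Lemma~\ref{Lem:LX98} and $Y(I_{n,j})$ via Gaussian supremum moments, then Borel--Cantelli) giving $\dimp Z([0,a])\le\min\{1/H,\ \beta+(1-H\beta)d\}$; for the lower bound, the projection onto $\R^d$ when $1\le Hd$, and otherwise a Frostman energy integral with $\gamma\in(d,\beta+(1-H\beta)d)$, conditioning on $D$ and using only the Gaussian law of the increment $Y(x)-Y(y)$ (no local nondeterminism is needed for a two-point estimate) together with the finiteness of negative moments of $D(1)$ from Hawkes' small-deviation estimate. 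So the architecture is right; a complete write-up must execute that lemma rather than defer it.
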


\begin{proof}\, Eq. (\ref{Eq:Range2}) follows from Theorem \ref{Th:Hdim}
and the well known result on Hausdorff and packing dimension of the
range of fractional Brownian motion (see, e.g., Chapter 18 of \cite{K85}).
In order to prove (\ref{Eq:G2}), by Theorem \ref{Th:Hdim2} it is
sufficient to prove that for the $\R^{d+1}$-valued process
$Z = \{Z(x), x \ge 0\}$ defined by $Z(x) = (D(x), Y(x)), x \ge 0$
and for any constant $a > 0$, we have
\begin{equation}\label{Eq:Z2}
\begin{split}
\dimh Z([0, a]) &= \dimp Z([0, a])\\
& = \left\{
\begin{array}{ll}
\frac 1 H \qquad  &\hbox{ if } 1 \le H d,\\
\beta + (1 - H \beta) d  &\hbox{ if } 1 > H d,
\end{array}
\right. \qquad \hbox{a.s.}
\end{split}
\end{equation}
Thanks to (\ref{Eq:dim-rel}), we can divide the proof of \eqref{Eq:Z2}
into proving the upper bound for $\dimp Z([0, a])$ and  the lower bound
for $\dimh Z([0, a])$ separately. These are given as Lemmas \ref{Lem:Up} and
\ref{Lem:Lw} below.
\end{proof}

\begin{lemma}\label{Lem:Up}
Let the assumptions of Proposition \ref{Prop:FBM} hold and let $a > 0$
be a constant. Then
\begin{equation}\label{Eq:Z3}
\dimp Z([0, a]) \le \left\{
\begin{array}{ll}
\frac 1 H \qquad  &\hbox{ if } 1 \le H d,\\
\beta + (1 - H \beta) d  &\hbox{ if } 1 > H d,
\end{array}
\right. \qquad \hbox{a.s.}
\end{equation}
\end{lemma}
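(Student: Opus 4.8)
The plan is to bound the packing dimension of $Z([0,a])$ by controlling the upper box-counting dimension of $Z$ restricted to small subintervals, then invoke the characterization \eqref{Def:dimp}. Since $D$ and $Y$ are independent and $D$ is a $\beta$-stable subordinator while $Y$ is fractional Brownian motion of index $H$, I would first record the basic modulus-of-continuity and small-increment estimates: for $Y$, the standard uniform modulus $\sup_{|s-t|\le h}|Y(s)-Y(t)| \le C h^{H}(\log(1/h))^{1/2}$ a.s.\ on compacts, and for $D$, the a.s.\ fact that for every $\eps>0$ one has $D(t+h)-D(t)\le C h^{1/\beta-\eps}$ fails (subordinators have jumps), so instead I would work with the fact that $D$ has at most countably many jumps and the bulk of its increments are small. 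The clean route is to cover $[0,a]$ by $n$ intervals of length $a/n$; the image of each under $Z$ sits in a box of side comparable to $\max\{\Delta_i D,\ (a/n)^{H}(\log n)^{1/2}\}$ in the appropriate coordinates, where $\Delta_i D$ is the oscillation of $D$ on the $i$-th interval.

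Concretely, I would split the index set: let $G$ be the (finite, for a given size threshold) set of intervals containing a ``big'' jump of $D$ of size $\ge (a/n)^{H}$, and let $B$ be the rest. Over $B$, the increment of $D$ on an interval of length $a/n$ is $\lesssim (a/n)^{H}(\log n)^{1/2}$ up to lower-order corrections coming from the $1/\beta$-scaling of $D$; I would instead use the sharper bound that for a $\beta$-stable subordinator, $N(D([0,a]\setminus(\text{big jumps})),\ \delta)\lesssim \delta^{-\beta}$ a.s. Combining the $\delta^{-\beta}$ bound for the $D$-coordinate with the $\delta^{-1/H}$ bound for the $d$-dimensional $Y$-coordinate via the standard product/graph covering argument (covering the graph of a $1/H$-dimensional-range, $H$-Hölder process over a set of box dimension $\beta$ in the time variable gives box dimension $\beta + (1-H\beta)d$ when $Hd<1$, and $1/H$ when $Hd\ge 1$), I get $\dimm Z(I) \le \max\{1/H,\ \beta+(1-H\beta)d\}$ on each such piece, and when $Hd\ge1$ this is just $1/H$. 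Then \eqref{Def:dimp} applied to the countable decomposition of $[0,a]$ into the ``non-jump'' part plus the countably many jump times (each of which contributes a single point, hence dimension $0$) yields the stated upper bound for $\dimp Z([0,a])$. Equivalently, one may cite the operator-stable packing-dimension result of Meerschaert and Xiao \cite{MX05} for the $D$-coordinate and combine it with the graph-covering estimate for $Y$, but since here $D$ and $Y$ have different scaling indices the cleanest argument is the direct covering just described.

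The main obstacle I expect is handling the $D$-coordinate correctly: because $D$ has jumps, one cannot simply use a deterministic Hölder modulus, so the counting argument must separate the finitely many large jumps (which only add isolated points to $Z([0,a])$ and do not affect dimension by $\sigma$-stability via \eqref{Def:dimp}) from the remaining increments, for which one needs the a.s.\ box-dimension bound $\dimm D([0,a]) \le \beta$ (equivalently, the number of $\delta$-intervals needed to cover the non-jump part of the range is $O(\delta^{-\beta})$). This is classical for stable subordinators (it follows, e.g., from $\dimp D([0,a]) = \beta$ and the fact that for subordinators upper box dimension of the range equals $\beta$), and once it is in hand the fusion with the fractional-Brownian-motion estimate is the routine graph-covering computation that I would not spell out in detail.
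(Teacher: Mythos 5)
Your overall strategy (a two-scale covering argument giving the bounds $1/H$ and $\beta+(1-H\beta)d$ on $\dimm Z([0,a])$, then passing to $\dimp$) is the same as the paper's, and the heuristic count you sketch does produce the right exponents. But the step you defer as ``the routine graph-covering computation'' is precisely where the difficulty lies, and your proposed way around it does not close. The quantity $\dimm$ of the image of $x\mapsto (D(x),Y(x))$ is not determined by the separate facts $\dimm D([0,a])\le\beta$ and the H\"older modulus of $Y$: to cover $Z(I)$ for a short time interval $I$ you must control, \emph{for that particular $I$}, how many $\delta$-intervals the set $D(I)$ meets, and this cannot be read off from a global covering of $D([0,a])$. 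Since $D(I)$ has diameter with infinite mean ($\E[D(s)]=\infty$ when $\beta<1$), you also cannot enclose $Z(I)$ in a box of controlled side. The paper supplies exactly the missing localized estimate (Lemma \ref{Lem:LX98}, from Pruitt--Taylor/Liu--Xiao): the expected number of intervals of length $b$ in a nested family met by $D([u,u+s])$ is at most $c_8\, s b^{-\beta}$. Applied with $(b,s)=(2^{-n},2^{-n})$ and $(2^{-n/\beta},2^{-n})$ on each dyadic interval $I_{n,j}$, multiplied by the expected covering number of $Y(I_{n,j})$ (finite by Fernique's inequality, and independent of the $D$-count), and followed by Borel--Cantelli, this yields the two a.s.\ bounds on $\dimm Z([0,a])$.

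Your big-jump surgery does not substitute for this. Removing the time intervals that contain a jump of size $\ge(a/n)^H$ does not reduce their $Z$-images to isolated points (each such interval still carries a full piece of the range); the threshold depends on the scale $n$, so you do not have a fixed countable decomposition to feed into \eqref{Def:dimp}; and decomposing $[0,a]$ into the jump times plus their complement is vacuous for box dimension, which only sees closures. There is also a slip in $\max\{1/H,\beta+(1-H\beta)d\}$: the two coverings give an upper bound equal to the \emph{minimum} of the two quantities, which is $1/H$ when $Hd\ge1$ and $\beta+(1-H\beta)d$ when $Hd<1$. None of this is fatal to the plan, but as written the proposal omits the key moment estimate that makes the fusion of the two coordinates rigorous.
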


In order to prove Lemma \ref{Lem:Up}, we will make use of  the fact that for
every $\eps> 0$ the function $Y(x)$ ($ 0 \le x \le a$) satisfies the
uniform H\"older condition of order $H - \eps$ and the following Lemma
\ref{Lem:LX98}, which is an immediate consequence of Lemma 3.2 in
Liu and Xiao \cite{LX98}. It can also be derived from Lemma 6.1 in
Pruitt and Taylor \cite{PT69}.

Let $c_7 > 0$ be a fixed constant. A
collection $\Lambda(b)$ of intervals of length $b$ in $\R$ is called $c_7$-nested
if no interval of length $b$ in $\R$ can intersect more than $c_7$ intervals
of $\Lambda(b)$. Note that for each integer $n \ge 1$, the collection of dyadic
intervals $I_{n,j}=[j/2^n,(j+1)/2^n]$ is $c_7$-nested with $c_7 = 3$.

\begin{lemma}\label{Lem:LX98}
Let $\{D(x), x \ge 0\}$ be a $\beta$-stable subordinator and let $\Lambda(b)$ be
a $c_7$-nested family. Denote by $M_u(b, s)$ the number of intervals in
$\Lambda(b)$ which intersect $D([u, u+s])$. Then there exists a positive
constant $c_8$ such that for all $u \ge 0$ and all $0 < b^\beta \le s$,
\begin{equation}\label{Eq:D1}
\E\Bigl(M_u(b, s)\Bigr) \le c_8\, s b^{-\beta}.
\end{equation}
If one takes \ $b = s \le 1$, then we  have
\begin{equation}\label{Eq:D2}
\E\Bigl(M_u( a, s)\Bigr) \le  c_8.
\end{equation}
\end{lemma}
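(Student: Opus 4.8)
The plan is to reduce the statement to a standard first-moment estimate for the range of a stable subordinator and then invoke the nesting property to pass to the arbitrary $c_7$-nested family $\Lambda(b)$. First I would observe that, by self-similarity and the stationarity of increments of $D$, the quantity $\E(M_u(b,s))$ does not depend on $u$, so it suffices to treat $u=0$ and estimate the expected number of intervals of $\Lambda(b)$ that meet $D([0,s])$. The key reduction is the following: if $\Lambda(b)$ is $c_7$-nested, then any interval of length $b$ meets at most $c_7$ members of $\Lambda(b)$; covering the random interval $[\,0,\, D(s)\,]$ (recall $D([0,s])=[0,D(s)]$ a.s.\ since $D$ is strictly increasing) by at most $\lceil D(s)/b\rceil + 1$ consecutive intervals of length $b$, each of which meets at most $c_7$ elements of $\Lambda(b)$, gives the deterministic bound $M_0(b,s) \le c_7\big(\lceil D(s)/b\rceil + 1\big)$.

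Taking expectations, I would use $\E(D(s)) $ and the scaling $D(s)\stk{d}{=} s^{1/\beta}D(1)$; since $\beta\in(0,1)$ the mean $\E(D(1))$ is actually infinite, so instead of the mean I would work with the known tail/hitting estimates for $D$. The cleanest route is to invoke Lemma 6.1 of Pruitt and Taylor \cite{PT69} (or Lemma 3.2 of Liu and Xiao \cite{LX98}) directly in the form it is stated there: the expected number of intervals of a fixed $c_7$-nested family $\Lambda(b)$ that intersect $D([0,s])$ is at most $c_8 s b^{-\beta}$ whenever $b^\beta\le s$. Thus the proof of (\ref{Eq:D1}) is really a citation plus the observation that the hypothesis $b^\beta\le s$ is exactly the regime where the range $[0,D(s)]$ is, with overwhelming probability, long compared to $b$ so the count is governed by the "bulk" term $s b^{-\beta}$ rather than the additive $+1$. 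Then (\ref{Eq:D2}) is the immediate specialization $b=s\le1$: here $b^\beta = s^\beta \ge s = b$ when $s\le 1$ and $\beta<1$, wait—one must instead note that with $b=s$ the right-hand side of (\ref{Eq:D1}) becomes $c_8 s\, s^{-\beta} = c_8 s^{1-\beta}\le c_8$ since $s\le 1$ and $1-\beta>0$; so (\ref{Eq:D2}) follows with the same constant (and the displayed "$a$" in (\ref{Eq:D2}) should read $s$, or one rescales).

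The main obstacle is purely bookkeeping: making sure the regime condition "$0<b^\beta\le s$" is used correctly when combining the deterministic covering bound with the tail behavior of $D(s)$, and in particular controlling $\E\big(\lceil D(s)/b\rceil\big)$ despite $\E(D(s))=\infty$. The resolution is that one does not need $\E(D(s))$ at all; the Pruitt--Taylor/Liu--Xiao lemma already packages the correct truncated estimate, because the number of $b$-intervals hit by $[0,D(s)]$ is bounded using the occupation/hitting measure of $D$ rather than the endpoint $D(s)$ itself, and that occupation estimate has the stated finite bound $c_8 s b^{-\beta}$ in the range $b^\beta\le s$. So the proof is short: state the deterministic nesting reduction, cite the occupation estimate for $D$, and specialize.
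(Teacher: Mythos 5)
Your overall route is the same as the paper's: the paper offers no proof of this lemma either, presenting it as ``an immediate consequence of Lemma 3.2 in Liu and Xiao \cite{LX98}'' (or of Lemma 6.1 in Pruitt and Taylor \cite{PT69}), which is exactly the citation you make. Your preliminary reductions — independence of $u$ by stationarity of increments, and the futility of the naive bound $M_0(b,s)\le c_7\big(\lceil D(s)/b\rceil+1\big)$ because $\E D(s)=\infty$ for $\beta\in(0,1)$ — are correct and explain why one must go through the sojourn-time (hitting) estimate packaged in those references rather than the endpoint $D(s)$.

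There is, however, a genuine gap in your derivation of \eqref{Eq:D2}. You correctly notice that with $b=s\le 1$ one has $b^\beta=s^\beta\ge s$, so the hypothesis $b^\beta\le s$ of \eqref{Eq:D1} \emph{fails} — and then you apply \eqref{Eq:D1} anyway, obtaining the bound $c_8 s^{1-\beta}\le c_8$. That intermediate bound is not only unjustified but generally false: if $\Lambda(s)$ covers the line, then $M_u(s,s)\ge 1$ always, while $c_8 s^{1-\beta}\to 0$ as $s\to 0$. The point is that $c_8\, s b^{-\beta}$ is the wrong regime when $b^\beta> s$: the underlying Pruitt--Taylor estimate has the form $\E (M)\le c\, s/T(b,s)$ with expected sojourn time $T(b,s)\asymp\min\{s,b^\beta\}$, which gives $c_8\, s b^{-\beta}$ when $b^\beta\le s$ and a constant when $b^\beta\ge s$; so \eqref{Eq:D2} is the other regime of the same estimate, not a specialization of \eqref{Eq:D1}. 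The cleanest repair within your framework is monotonicity in the time variable: since $D([u,u+s])\subseteq D([u,u+s^\beta])$ and $s\le s^\beta$ for $s\le 1$, one has $M_u(s,s)\le M_u(s,s^\beta)$, and \eqref{Eq:D1} applies to the pair $(b,s')=(s,s^\beta)$ with equality in the hypothesis, yielding $\E\big(M_u(s,s^\beta)\big)\le c_8\, s^\beta\cdot s^{-\beta}=c_8$. (Your observation that the ``$a$'' in \eqref{Eq:D2} should read ``$s$'' is correct.)
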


Now we are ready to prove Lemma \ref{Lem:Up}.

{\it Proof of Lemma \ref{Lem:Up}.} \, The proof is based on a moment argument.
We divide the interval $[0, a]$ into $(\lfloor a \rfloor +1)2^n$ dyadic
intervals $I_{n, j}$ of length $2^{-n}$.

First we construct a covering of the range $Z([0, a])$ by using balls
in $\R^{d+1}$ of radius $2^{-Hn}$ as follows.
Define $t_{n, j}=j/2^n$ so that for each $I_{n, j} = [t_{n, j}, t_{n, j}+2^{-n}]$,
the image $Y(I_{n, j})$ is contained in a ball in $\R^d$
of radius $\sup_{s \in I_{n, j}} \|Y(s) - Y(t_{n, j})\|$ and can be covered by
at most
\begin{equation}\label{Eq:Nn}
N_{n, j}= c_9\,\bigg(\frac{ \sup_{s \in I_{n, j}} \|Y(s) - Y(t_{n, j})\|} {2^{-Hn}}\bigg)^d
\end{equation}
balls of radius $2^{-Hn}$. By the self-similarity and stationarity of increments
of $Y$, we have
\begin{equation}\label{Eq:Nn2}
\begin{split}
\E\big(N_{n, j}\big)&= c_9\,2^{Hdn}
\E\Big[\big(\sup_{s \in I_{n, j}} \|Y(s) - Y(t_{n, j})\|\big)^d\Big] \\
&= c_9 \E\Big[\big(\sup_{s \in [0, 1]} \|Y(s)\|\big)^d\Big] :=c_{10} < \infty,
\end{split}
\end{equation}
where the last inequality follows from the well known tail probability for
the supremum of Gaussian processes (e.g., Fernique's inequality).

In order to get a covering for $D(I_{n, i})$, let $\Gamma(2^{-n})$ be the
collection of dyadic intervals of order $n$ in $\R_+$. Let $M_{n, j}$ be the
number of dyadic intervals in $\Gamma(2^{-n})$ which intersect $D(I_{n, j})$.
Applying (\ref{Eq:D2}) in Lemma \ref{Lem:LX98} with $b_n = s_n = 2^{-n}$, we
obtain that
\begin{equation}\label{Eq:Mn}
\E(M_{n, j}) \le c_8, \quad \forall \ 1 \le j \le (\lfloor a \rfloor +1)2^n.
\end{equation}
Since $2^{-n} < 2^{-Hn}$, we see that $Z(I_{n,j}) = \{(D(x), Y(x)): x \in I_{n, j}\}$
can be covered by at most $M_{n, j}N_{n, j}$ balls in $\R^{d+1}$ of radius $2^{-Hn}$.
Denote by $N\big(Z([0, a]), 2^{-Hn}\big)$ the smallest number of balls in $\R^{d+1}$
of radius $2^{-Hn}$ that cover $Z([0, a])$, then
\[
N\big(Z([0, a]), 2^{-Hn}\big) \le \sum_{j=1}^{(\lfloor a \rfloor +1) 2^{n}}
 M_{n, j}N_{n, j}.
\]
It follows from (\ref{Eq:Nn2}), (\ref{Eq:Mn}) and the independence of
$Y$ and $D$ that
\[
\E\Big[N\big(Z([0, a]), 2^{-Hn}\big)\Big]
\le (\lfloor a \rfloor +1) c_8 c_{10}\, 2^n.
\]
Hence, for any $\eps > 0$,
\[
\P\Big\{ N\big(Z([0, a]), 2^{-Hn}\big) \ge (\lfloor a \rfloor +1) c_8 c_{10}\, 2^{n (1+\eps)}\Big\} \le
2^{-n\eps}.
\]
It follows from the Borel-Cantelli lemma that almost surely
$$N\big(Z([0, a]), 2^{-Hn}\big) < (\lfloor a \rfloor +1) c_8 c_{10}\, 2^{n (1+\eps)}$$
for all $n$ large enough. This and (\ref{Eq:dimm}) imply that $\dimm Z([0, a]) \le (1+\eps)/H$ a.s.
Since $\eps>0$ is arbitrary, we obtain from the above and (\ref{Eq:dim-rel}) that
$\dimp Z([0, a]) \le 1/H$ almost surely.

Next we construct a covering for the range $Z([0, a])$ by using balls
in $\R^{d+1}$ of radius $2^{-n/\beta}$. Let $\Gamma(2^{-n/\beta})$ be the
collection of intervals in $\R_+$ of the form $I'_{n,k}=[\frac{k} {2^{n/\beta}},
\frac{k+1}{2^{n/\beta}}]$, where $k$ is an integer. Then the class $\Gamma(2^{-n/\beta})$
is $3$-nested. Let $M'_{n, j}$ be the
number of intervals in $\Gamma(2^{-n/\beta})$ that intersect $D(I_{n, j})$.
By Lemma \ref{Lem:LX98} with $b_n = 2^{-n/\beta}$ and $s_n = 2^{-n}$,
we derive $\E(M'_{n, j}) \le c_8$. Thus $D(I_{n, j})$ can almost surely be covered by
$M'_{n, j}$ intervals of length $2^{-n/\beta}$ from $\Gamma(2^{-n/\beta})$.

On the other hand, the image $Y(I_{n, j})$ can be covered by at most
\[
N'_{n, j}= c_9\,\bigg(\frac{ \sup_{s \in I_{n, j}}\|Y(s) - Y(t_{n, j})\|}
{2^{-n/\beta}}\bigg)^d
\]
balls of radius $2^{-n/\beta}$, where $t_{n, j}=j/2^n$, and then similar to (\ref{Eq:Nn2})
we derive
\begin{equation}\label{Eq:Nn3}
\E\big(N'_{n, j}\big)= c_{10} 2^{n(\frac 1 {\beta} - H)d}.
\end{equation}
Denote by $N\big(Z([0, a]), 2^{-n/\beta}\big)$ the smallest number of balls in $\R^{d+1}$
of radius $2^{-n/\beta}$ that cover $Z([0, a])$, then
\[
N\big(Z([0, a]), 2^{-n/\beta}\big) \le \sum_{j=1}^{(\lfloor a \rfloor +1) 2^{n}}
 M'_{n, j}N'_{n, j}.
\]
By (\ref{Eq:Nn3}) and the independence of $Y$ and $D$ we have
\[
\E\Big[N\big(Z([0, a]), 2^{-n/\beta}\big)\Big]
\le (\lfloor a \rfloor +1) c_8 c_{10}\, 2^{n(1+(\frac 1 {\beta} - H)d)}.
\]
Hence, for any $\eps > 0$, the Borel-Cantelli Lemma implies that a.s.
\[
N\big(Z([0, a]), 2^{-n/\beta}\big) < (\lfloor a \rfloor +1) c_8 c_{10}\,
2^{n(1+(\frac 1 {\beta} - H )d +\eps)}
\]
for all $n$ large enough. This and (\ref{Eq:dimm}) imply that $\dimm Z([0, a])
\le \beta + (1 - \beta H)d +\beta \eps$ almost surely which,  in turn, implies
$\dimp Z([0, a]) \le \beta + (1 - \beta H)d$ a.s.

Combining the above we have
\[
\dimp Z([0, a]) \le \min\Big\{\frac 1 H, \beta + (1 - \beta H)d\Big\}
\quad \hbox{a.s.}
\]
This proves (\ref{Eq:Z3}). \qed


\begin{lemma}\label{Lem:Lw}
Under the assumptions of Proposition \ref{Prop:FBM},  we have
\begin{equation}\label{Eq:Z5}
\dimh Z([0, 1]) \ge \left\{
\begin{array}{ll}
\frac 1 H \qquad  &\hbox{ if } 1 \le H d,\\
\beta + (1 - H \beta) d  &\hbox{ if } 1 > H d,
\end{array}
\right. \qquad \hbox{a.s.}
\end{equation}
\end{lemma}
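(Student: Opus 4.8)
The plan is to prove the lower bound in (\ref{Eq:Z5}) by the capacity (energy) method. Write $\mu$ for the image of Lebesgue measure on $[0,1]$ under the map $x\mapsto Z(x)=(D(x),Y(x))$; this is a finite, atomless Borel measure carried by $\overline{Z([0,1])}$, which has the same Hausdorff dimension as $Z([0,1])$ (the two sets differ only by the at most countably many left limits at the jump times of $D$). By the standard energy criterion (see, e.g., \cite{K85}) it suffices to show that for every $\gamma$ strictly below the right-hand side of (\ref{Eq:Z5}),
\[
\E\int_0^1\!\!\int_0^1\frac{ds\,dt}{|Z(s)-Z(t)|^{\gamma}}<\infty ;
\]
the double integral is then finite a.s., whence $\dimh Z([0,1])\ge\gamma$ a.s., and letting $\gamma$ increase to the bound along a sequence gives (\ref{Eq:Z5}). (When $1\le Hd$ there is also the cheap bound $\dimh Z([0,1])\ge\dimh Y([0,1])=\min\{d,1/H\}=1/H$, obtained by projecting onto the last $d$ coordinates, but the estimate below covers both cases uniformly.)

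Since $D$ and $Y$ are independent with stationary increments and satisfy the self-similarities $D(r)\eqd r^{1/\beta}D(1)$, $Y(r)\eqd r^{H}Y(1)$, the integrand reduces (with $r=|s-t|\in(0,1]$) to
\[
\E\Big[\big(|D(r)|^{2}+|Y(r)|^{2}\big)^{-\gamma/2}\Big]=\E\Big[\big(r^{2/\beta}\xi^{2}+r^{2H}|G|^{2}\big)^{-\gamma/2}\Big],
\]
where $\xi=D(1)>0$ is positive $\beta$-stable and $G=Y(1)$ is a nondegenerate Gaussian vector in $\R^{d}$, independent of $\xi$. The crux is the estimate
\[
\E\Big[\big(|D(r)|^{2}+|Y(r)|^{2}\big)^{-\gamma/2}\Big]\le C\Big(r^{-H\gamma}\,\1_{\{\gamma<d\}}+r^{(\frac1\beta-H)d-\frac\gamma\beta}+r^{1-H\beta-H\gamma}\Big),\qquad 0<r\le1,
\]
valid for $0<\gamma<d+1$, $\gamma\ne d$. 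I would derive it by bounding the left side by a constant times $\E\big[\min\{D(r)^{-\gamma},|Y(r)|^{-\gamma}\}\big]$, conditioning on $D(r)$, and splitting according to whether the Gaussian scale $r^{H}$ dominates $D(r)$ or not. The ingredients are the Gaussian small-ball bound $\P(|G|\le\rho)\le C\min\{1,\rho^{d}\}$ and the moment bound $\E\big[|G|^{-\gamma}\1_{\{|G|>\rho\}}\big]\le C(1+\rho^{d-\gamma})$ (both from the boundedness of the density of $G$), together with two facts about $\xi$: it has finite negative moments of all orders (its density vanishes faster than any power at the origin) and $\P(\xi>x)\asymp x^{-\beta}$ as $x\to\infty$. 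Carrying out the resulting one-dimensional integrals against the stable density $g_{r}(u)=r^{-1/\beta}g_{1}(ur^{-1/\beta})$ then produces the stated bound, the last two terms coming from the regimes where $D(r)$ is, respectively, small or large compared with $r^{H}$ (the heavy right tail of $\xi$ being used in the latter).

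It remains to integrate the key estimate over $[0,1]^{2}$. One has $\int_{0}^{1}r^{-H\gamma}\,dr<\infty$ iff $\gamma<1/H$; $\int_{0}^{1}r^{(\frac1\beta-H)d-\gamma/\beta}\,dr<\infty$ iff $\gamma<\beta+(1-H\beta)d$; and $\int_{0}^{1}r^{1-H\beta-H\gamma}\,dr<\infty$ iff $\gamma<2/H-\beta$, a constraint weaker than both of the preceding ones under the respective hypotheses (indeed $2/H-\beta>1/H$ always, while $2/H-\beta>\beta+(1-H\beta)d$ when $1>Hd$, since $(1-H\beta)(2-Hd)>0$). Hence the expected energy is finite for every $\gamma<1/H$ when $1\le Hd$ (such $\gamma$ automatically satisfies $\gamma<d$) and for every $d<\gamma<\beta+(1-H\beta)d$ when $1>Hd$; letting $\gamma$ increase to the claimed value in each case proves (\ref{Eq:Z5}).

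I expect the main obstacle to be the key estimate of the second paragraph: the two-regime analysis must be carried out carefully enough that the exponents produced by the region where $D(r)$ is small and by the region where $|Y(r)|$ is small match up to give exactly the critical values $\beta+(1-H\beta)d$ and $1/H$, rather than something larger; both the heavy right tail and the extremely thin left tail of $D(1)$ enter in an essential way. The rest is routine bookkeeping for the energy method.
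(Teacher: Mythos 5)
Your proposal is correct and follows essentially the same route as the paper: Frostman's energy criterion applied to the occupation measure of $Z$, conditioning on $D$, the Gaussian estimate $\E\big[(\rho+\|\Xi\|)^{-\gamma}\big]\le c\,\rho^{-(\gamma-d)}$, self-similarity, and the finiteness of negative moments of $D(1)$. The paper's version is a little leaner — it handles the case $1\le Hd$ in one line by projecting $Z([0,1])$ onto $\R^d$, and for $1>Hd$ it works only with $\gamma\in\big(d,\ \beta+(1-H\beta)d\big)$, so the single bound $C|x-y|^{(\frac1\beta-H)d-\gamma/\beta}$ (your middle term) suffices and neither the two-regime split nor the right tail $\P(\xi>x)\asymp x^{-\beta}$ is ever needed; your extra term $r^{1-H\beta-H\gamma}$ is correctly shown to be harmless but is superfluous.
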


\begin{proof}\, Since the projection of $Z([0, 1])$ into $\R^d$ is $Y([0, 1])$ and
$\dimh Y([0, 1]) =  \frac 1 H$ a.s.\ when $1 \le H d$. This implies the first
inequality in \eqref{Eq:Z5}.

To prove the inequality in \eqref{Eq:Z5} for the case $1 > Hd$, by Frostman's theorem
(cf. \cite[p.133]{K85}) along with the inequality
$$\|Z(x)-Z(y)\|
\geq \frac 1 2 \big[|D(x) - D(y)| + \|Y(x) - Y(y)\| \big],$$
it is sufficient to prove that for every constant $\gamma \in (0, \beta + (1 - H \beta) d)$,
we have
\begin{equation}\label{Eq:En1}
\E\int_0^a\int_0^a \frac {dx\, dy } {\big[|D(x) - D(y)| + \|Y(x) - Y(y)\|\big]^\gamma}< \infty.
\end{equation}
Since $1 > Hd$, we have $\beta + (1 - H \beta) d > d$. We only need to verify
(\ref{Eq:En1}) for every $\gamma \in (d,  \beta + (1 - H \beta) d)$.

For this purpose, we will make use of the following easily verifiable fact
(see, e.g., Kahane \cite[p.279]{K85}): If
$\Xi$ is a standard normal vector in $\R^d$, then there is a finite constant $c_{11}> 0$
such at for any constants $\ga > d$  and $\rho \ge 0$,
\[
\E\bigg[\frac 1 {\big(\rho + \|\Xi\|\big)^{\ga}}\bigg] \le
c_{11}\, \rho^{-(\ga -d)}.
\]

Fix $x, y \in [0, a]$ such that $x\ne y$. We use $\E_1$ to denote the conditional
expectation given the subordinator $D$,  apply the above fact with
$\rho = |D(x) - D(y)||x-y|^{-H}$ and use the self-similarity of $D$ to derive
\begin{equation}\label{Eq:En2}
\begin{split}
&\E\bigg(\frac 1 {\big[|D(x) - D(y)| + \|Y(x) - Y(y)\|\big]^\gamma}\bigg)\\
&= |x-y|^{-H\gamma} \E\Bigg[\E_1\bigg(\frac 1 {(\rho + \|\Xi\|)^\gamma}
\bigg)\Bigg]\\
&\le c_{11} |x-y|^{-H\gamma}\, \E\Bigg[\frac {|x-y|^{H(\gamma - d)}}
{|D(x)-D(y)|^{\gamma-d}}\Bigg]\\
&=  c_{12} \frac 1 {|x-y|^{H d + (\ga-d)/\beta}},
\end{split}
\end{equation}
where the last equality follows from the $1/\beta$-self-similarity of
$D$ and the constant $c_{12} = c_{11} \E \big(D(1)^{-(\gamma - d)}\big)$. Recall
from Hawkes \cite[Lemma 1]{H71} that, as $r \to 0+$,
\[
\P(D(1) \le r) \sim c_{13}
r^{\beta/(2(1- \beta))}\exp\bigg(- (1-\beta)\beta^{\beta/(1-\beta)}\,
r^{-\beta/(1-\beta)}\bigg),
\]
where $c_{13}= \big[ 2\pi (1-\beta) \beta^{\beta/(2(1-\beta))}\big]^{-1/2}$.
We verify easily $c_{12} <\infty.$

It follows from Fubini's theorem and (\ref{Eq:En2}) that
\begin{equation}\label{Eq:En3}
\begin{split}
&\E\int_0^a\int_0^a \frac {dx\, dy } {\big[|D(x) - D(y)| + \|Y(x) - Y(y)\|\big]^\gamma}\\
&\le c_{12} \int_0^a\int_0^a \frac {dx\, dy } {|x-y|^{H d + (\ga-d)/\beta}} < \infty,
\end{split}
\end{equation}
the last integral is convergent because $H d + (\ga-d)/\beta< 1$. This proves
(\ref{Eq:En1}) and thus the lemma.
\end{proof}

\begin{remark}
Other iterated processes can arise as scaling limits of CTRW with dependent
and/or heavy-tailed jumps or waiting times. For example, the process $Y$ can
be taken as a linear fractional stable motion, see \cite{MNX09}. Our main
theorems in Section 2 are applicable to these self-similar processes too.
However, the problems for determining the Hausdorff dimensions of the range
and graph sets of the processes $Y$ and $Z(x) = (D(x), Y(x))$ have not been
satisfactorily solved, see \cite{SX10,XiaoLin} for partial solutions.
\end{remark}

\bigskip

\end{document}